\documentclass[a4paper]{amsart}
\usepackage{amsfonts,amsmath,amsthm,amssymb,graphicx,appendix,enumitem,color,hyperref}
\usepackage{dsfont}

\newtheorem{theorem}{Theorem}
\newtheorem{lemma}{Lemma}

\title{Exponential Bounds and Analyticity for the Tree Builder Random Walk}
\author{Caio Alves}
\address[Caio Alves]{FH Technikum Wien, Vienna, Austria}
\email[Caio Alves]{caio\_teodoro.de\_magalhaes\_alves@technikum-wien.at}
\author{Rodrigo Ribeiro}
\address[Rodrigo Ribeiro]{IMPA Tech, Rio de Janeiro, Brazil}
\email[Rodrigo Ribeiro]{rodrigo.ribeiro@impatech.edu.br}

\thanks{R.\ Ribeiro is affiliated with IMPA Tech, Rio de Janeiro, Brazil. E-mail: rodrigo.ribeiro@impatech.edu.br \\ C.\ Alves is affiliated with FH Technikum Wien, Vienna, Austria. Email: caio\_teodoro.de\_magalhaes\_alves@technikum-wien.at}

\newcommand{\Law}{\mathcal{L}}
\newcommand{\hitfetak}{\widetilde{H}}
\newcommand{\Lbrace}{\left\lbrace}
\newcommand{\Rbrace}{\right\rbrace}
\newcommand{\D}[1]{{\rm dist}(X_{#1},o)}
\newcommand{\Deg}[1]{{\rm deg}(X_{#1})}
\newcommand{\mutau}{\mu_{\tau}}
\newcommand{\mudtau}{\mu_{D}}

\begin{document}

\begin{abstract}
    In this work we investigate a class of random walks that interacts with its environment called Tree Builder Random Walk (TBRW). In our settings, at each step, the walker adds a random number of vertices to its position sampled according to a distribution $Q$. Previous works showed that the walker is ballistic with a well-defined speed, and that the TBRW admits a renewal structure, meaning that the process can be split into i.i.d epochs. We show that the first renewal time has exponential tail. Moreover, we show two consequences of the light tail of the first renewal time:  an exponential upper bound for the empirical speed of the walker, and, for the case in which the walker adds only one vertex with probability $p$, we show that the limiting speed is an analytic function of the parameter $p$. In some of our proofs, we apply techniques from bond percolation, which consist of extending probabilities to the complex numbers and using the Weierstrass $M$-test. 
\end{abstract}

\maketitle

\section{Introduction}

The Tree Builder Random Walk (TBRW) is a class of random walks that interacts with its environment by modifying the underlying graph. At each step, the walker can add a random number of vertices to its current position, and then takes a simple random walk step on the (possibly) updated graph. This feedback mechanism, in which the walker actively shapes the geometry of its environment, sets the TBRW apart from classical Random Walk on Random Environment (RWRE) models \cite{solomon1975,zeitouni2004}, where the environment is fixed, and from reinforced random walk models \cite{pemantle2007}, where the walker modifies transition probabilities but not the graph itself.

More broadly, the interaction between random walks and random media remains a highly active area. Recent progress includes fundamental advances in the random conductance model \cite{biskup2011}, connections between reinforced random walks and supersymmetric spin models \cite{sabot2015}, the analysis of once-reinforced random walks on trees \cite{collevecchio2020}, and law of large numbers for random walks in dynamic random environments \cite{avena2009}. 

The TBRW was introduced in \cite{figueiredo2017building}, where the authors showed that, in the Bernoulli case, where a new vertex is added with probability $p$ at each step, the walker is transient and ballistic with a well-defined speed $v(p)$, that is,
$$
\lim_{n\to \infty}\frac{{\text{dist}}(X_n,o)}{n} = v(p) > 0,
$$
almost surely. The general case, where at each step the number of added vertices follows an arbitrary distribution $Q$, was addressed in \cite{IRVZ22}, where the authors established transience and ballisticity under a uniformly elliptic condition on $Q$. In \cite{ribeiro2023}, this was extended to a full Strong Law of Large Numbers with a well-defined speed $v(Q)$, a Central Limit Theorem, and a Law of the Iterated Logarithm, via the construction of a renewal structure for the TBRW. Whereas in \cite{blanc2026}, the authors investigated a biased version of the TBRW, showing that the walker can be either ballistic or recurrent depending on the bias parameter.

The key tool behind these limit theorems is the renewal structure itself: the walker's trajectory decomposes into i.i.d.\ epochs, which allows one to transfer classical results for i.i.d.\ sequences to the study of the walker. This approach has a long and fruitful history in random walk theory; 
notably, Sznitman and Zerner \cite{sznitman_zerner1999} used renewal structures to prove ballisticity and a SLLN for RWRE in $\mathbb{Z}^d$ under condition~$(T')$, and Sznitman \cite{sznitman2000} later obtained a CLT under moment conditions on the first regeneration time.

A recurring theme in this program is that the finer properties of the walker, CLT, large deviations, regularity of the speed, are all governed by the tail behavior of the first renewal time $\tau_1$. In \cite{ribeiro2023}, the authors showed a stretched exponential bound $P_Q(\tau_1 \ge n) \le e^{-cn^{1/8}}$, which already suffices for the CLT and the LIL. However, sharper tails unlock stronger results.

In this paper, we substantially improve the tail bound on $\tau_1$ and exploit it to derive two new results for the TBRW:
\begin{enumerate}[label=(\roman*)]
    \item {\bf Exponential concentration of the speed.} We prove that the probability that the empirical speed ${\rm dist}(X_n,o)/n$ exceeds the limiting speed $v(Q)$ by some $\varepsilon$ is exponentially small in $n$, uniformly over the family of uniformly elliptic distributions (Theorem~\ref{t:ld}).
    \item {\bf Analyticity of the speed.} In the Bernoulli case $Q = {\rm Ber}(p)$, we show that $v(p)$ is an analytic function of $p \in (0,1]$ (Theorem~\ref{t:v}). Our approach adapts techniques from bond percolation developed in \cite{grego}, extending probabilities to the complex plane and applying the Weierstrass $M$-test, with the exponential tail of $\tau_1$ providing the necessary summability.
\end{enumerate}
In order to state our results precisely, let us introduce the model and the necessary notation.

\subsection{The Tree Builder Random Walk (TBRW)}\label{s:bgrw}
The TBRW generates a sequence of pairs $(T_n,X_n)$, where $T_n$ denotes the rooted tree at time $n$ and $X_n$ is a vertex of $T_n$ which represents the position of the walker at time $n$. The model is defined in a markovian fashion as follows: fix an initial state $(T_0,X_0)$, where $T_0$ is a rooted finite tree of root $o$ and $X_0$ is a vertex of $T_0$, and a sequence of probability distribution over the natural numbers $\{\Law_n\}_n$, then for each $n$
    \begin{itemize}
        \item obtain $T_{n+1}$ from $(T_n,X_n)$ by adding random number of vertices to $X_n$. This random number of vertices is sampled according to $\Law_{n+1}$;
        \item obtain $X_{n+1}$ by moving $X_n$ to a uniformly selected neighbor of it in $T_{n+1}$;
    \end{itemize}
In words, we first add a random number of vertices to $X_n$ according to the distribution $\Law_{n+1}$, and then let $X_n$ take a single step of a simple random walk on the, possibly updated, tree $T_{n+1}$. 

To illustrate the model and how the parameter $p$, in the Bernoulli case $Q= {\rm Ber}(p)$, shapes the tree, Figure~\ref{fig:tbrw-trees} shows three independent realizations of the TBRW for $p = 0.1, p=0.5$ and $p=0.9$. In each case, the number of steps is chosen as $n = \lfloor 99/p \rfloor$ so that the expected number of vertices is approximately $100$, for a fair comparison across different regimes.

\begin{figure}[ht]
  \centering
  \includegraphics[width=0.32\textwidth]{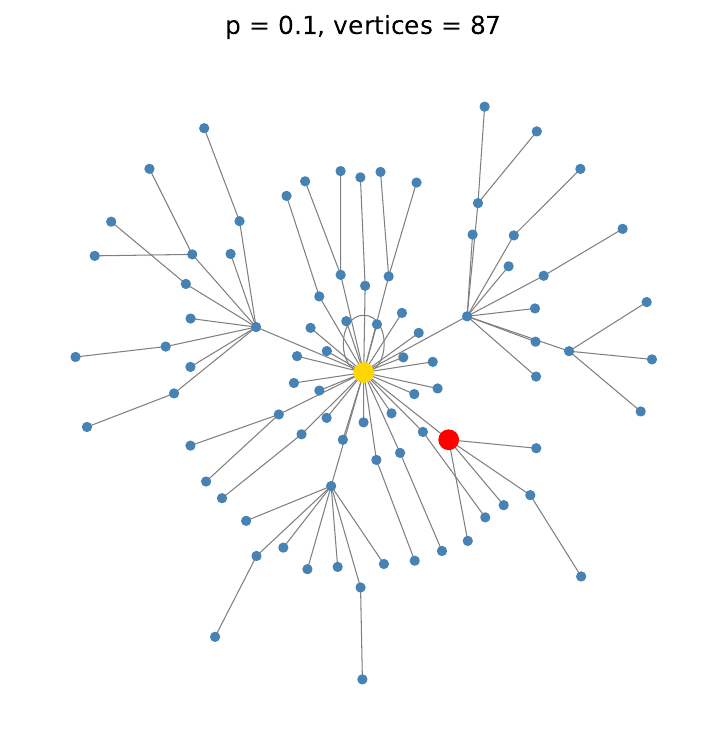}\hfill
  \includegraphics[width=0.32\textwidth]{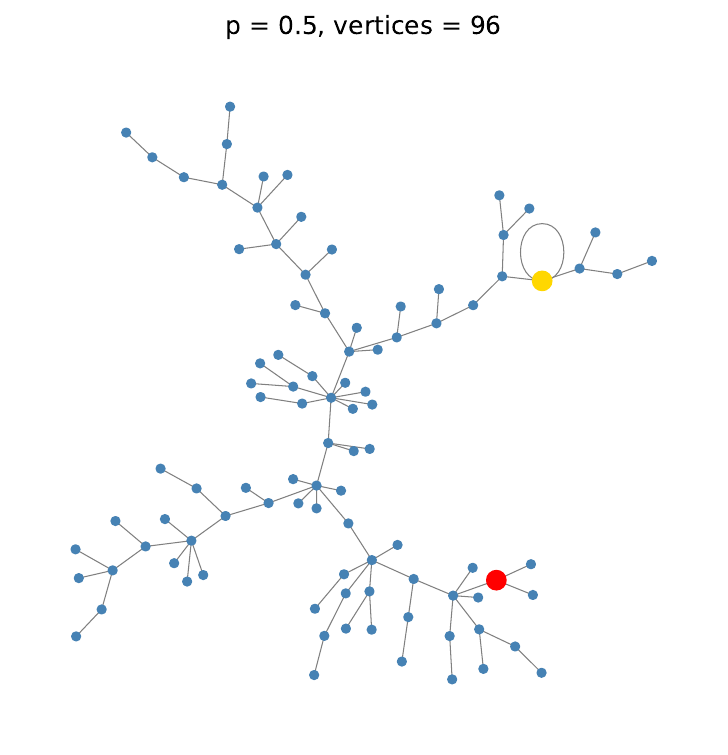}\hfill
  \includegraphics[width=0.32\textwidth]{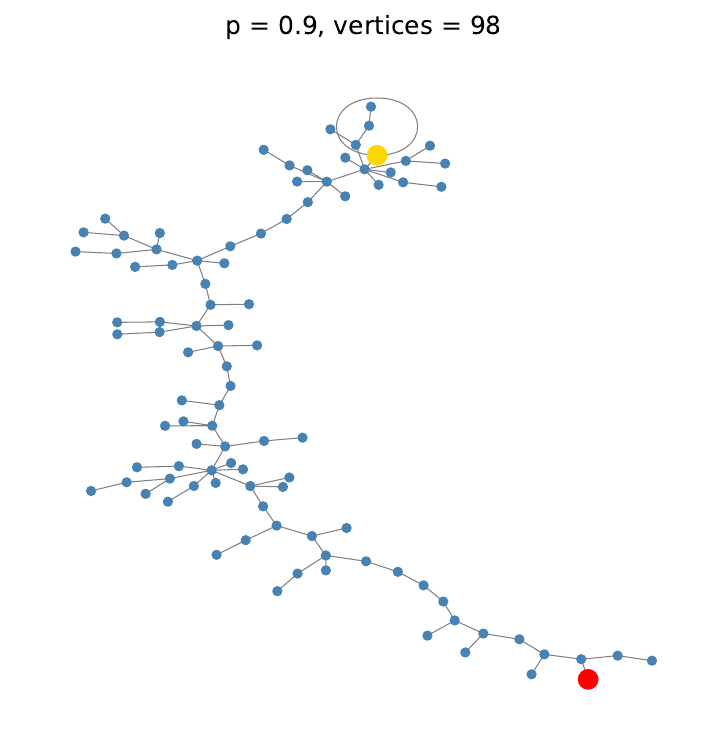}
  \caption{TBRW trees with ${\sim}100$ expected vertices for $p = 0.1$, $p = 0.5$, and $p = 0.9$.}
  \label{fig:tbrw-trees}
\end{figure}

\subsubsection{Basic Notation and Conventions}
We will make use of the notation $\mathcal{F}_n$ for the natural filtration, that is,
\begin{equation}\label{eq:Fn}
    \mathcal{F}_n := \sigma(T_0,X_0,T_1,X_1,T_2,X_2,\dots,T_n,X_n).
\end{equation}
Notice that there is no need to add information about value of the number of vertices added at each step, since they are obtained by keeping track on the changes on the trees from one step to the other. We will reserve the notation $\mathcal{F}$ to the smallest sigma algebra that makes the whole process measurable. And the usual notation $(\Omega, \mathcal{F})$ for the measurable space.

In this paper we will investigate the instance of the model when the sequence of laws $\{\Law_n\}_n$ is set to be $\Law_n = Q$ for all $n$. In this case, we will write $P_{T_0,X_0; Q}$ (and $E_{T_0,X_0;Q}$ for the associated expectation), for the distribution of the TBRW with initial state $(T_0,X_0)$ and $\Law_n = Q$ for all $n$. We refer to the distribution $Q$ as the {\it leaf distribution} since it controls the number of leaves added at each step. When the initial state is an edge with $X_0$ at the non-root tip of it, we omit the initial state for simplicity, writing $P_Q$ (respec. $E_Q$). 

We will leverage the tree structure of $T_n$ quite often, and it will be useful to apply genealogy terms to refer to vertices. In this direction, given a vertex $x$ on a tree $T$, we will write $f(x)$ for its {\it father}, that is, for the neighbor of $x$ on $T$ that is closer to the root $o$. Also, we will denote by $\D{n}$ the (graph) distance of the walker at time $n$ from the root $o$.

We will write $|T|$ for the number of vertices on $T$ and $h(T)$ for its height. And finally, $\Deg{n}$ denotes the degree of $X_n$ in $T_n$, that is, the number of neighbors of $X_n$ at time $n$.

\subsection{The Uniformly Elliptic Condition}
In the TBRW, the notion of ballisticity is connected to a condition on the sequence of laws $\Law$. We say a sequence of probability distributions $\Law = \{\Law_n\}_n$ over the natural numbers is said to be {\it uniformly elliptic} if there exists $\kappa \in (0,1]$ such that
\begin{equation}\label{eq:ue}
    \inf_{n \in \mathbb{N}}\Law_n(\{1,2,\dots\}) = \kappa.
\end{equation}
A TBRW with a uniformly elliptic sequence $\Law$ has the feature that at each step the walker adds at least one leaf to its position with probability at least $\kappa$. In \cite{IRVZ22}, the authors proved that under this settings, the TBRW is ballistic.

For a given $\kappa \in (0,1]$, we let $\mathcal{Q}_\kappa$ be the family of probability distribution over $\mathbb{N}$ such that $Q(\{1,2,\dots\})\ge \kappa$.

\subsection{Main Results}
In \cite{ribeiro2023}, the authors constructed a renewal structure for the TBRW where the first renewal time $\tau_1$ is defined as follows.
\begin{equation}\label{def:tau1}
    \begin{split}
        \tau_1 =
        \inf \Lbrace
        \begin{array}{c}
            n >0 :\Deg{n} = 1, \D{s} < \D{n} \le \D{t}, \\ 
            \forall s <n, \forall t>n
        \end{array}
        \Rbrace.
    \end{split}
\end{equation}
In words, $\tau_1$ is the first time the walker reaches a certain distance $\D{n}$ by jumping to a leaf and then it never visits the father of that leaf.

Our first main result shows that $\tau_1$ has exponentially light tail. Moreover, the rate can be chosen uniformly across all distributions $Q \in \mathcal{Q}_\kappa$ for some $\kappa \in (0,1]$. More precisely, we have the following.
\begin{theorem}[Exponential tail of $\tau_1$]\label{t:tautail} Consider a TBRW with leaf distribution $Q$ and fix $\kappa\in (0,1]$, then there exists a constant $c$ depending on $\kappa$ only, such that
$$
\sup_{Q \in \mathcal{Q}_\kappa}P_Q(\tau_1 \ge n) \le e^{-cn}.
$$
    
\end{theorem}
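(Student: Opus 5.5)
We will prove the exponential tail of $\tau_1$ by the standard scheme for renewal times: we build a sequence of attempts, each succeeding with probability bounded below uniformly in $Q\in\mathcal{Q}_\kappa$. We also show that the time spent on a failed attempt has an exponential tail, again uniformly. The $n$ then splits into a geometric number of pieces, each exponentially integrable, and a geometric sum of i.i.d.-dominated variables with uniform exponential moments has an exponential tail.

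\textbf{Step 1: linear distance.} We first establish a uniform exponential ballisticity estimate. There exist $\delta,c>0$, depending only on $\kappa$, such that
\begin{equation*}
P_{T_0,X_0;Q}\bigl(\D{n}-\D{0}\le \delta n\bigr)\le e^{-cn}
\end{equation*}
for all $Q\in\mathcal{Q}_\kappa$ and all initial states. The heuristic is that leaves accumulate where the walker spends time. Each visit to a vertex adds a leaf with probability at least $\kappa$, and a leaf excursion costs two steps and returns the walker to the same vertex. So, looking only at the steps between a vertex and its father or children, the walker sees at least as many children as it has accumulated. A vertex visited $k$ times has order $\kappa k$ children, so the drift toward the leaves increases. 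The plan is to compare the height process with a random walk whose drift away from the root is at least some $\eta(\kappa)>0$, except on a set of steps whose number has an exponential tail. A direct way to get this is to show that in each block of $L$ steps, with probability bounded below, the walker goes down $L/2$ fresh levels. A fresh vertex has degree one plus the added leaves, and adding a leaf and stepping onto it moves the walker away with probability at least $\kappa/2$. A large deviation bound for the sum of block increments then gives the claim.

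\textbf{Step 2: escape.} Next we bound the probability of never returning. Suppose the walker has just stepped onto a newly created leaf $x$ at a new maximal distance. We show that the probability it never visits $f(x)$ again is at least some $\gamma(\kappa)>0$. The walker sits at a fresh vertex with no subtree, so by Step 1 started from $x$ it goes to distance $\delta m$ by time $m$ except with probability $e^{-cm}$. Returning to $f(x)$ requires backtracking all the way to $x$, which the exponential estimate controls. A union bound over $m$ leaves a positive probability after first forcing some initial steps, say: add a leaf, step down, repeated $K$ times with probability at least $(\kappa/2)^K$. We also need that the time until a failure is detected, namely the return time to $f(x)$ when it is finite, has an exponential tail on the failure event. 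This follows because a return after time $m$ requires the distance to drop by order $\delta m$, which has probability $e^{-cm}$.

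\textbf{Step 3: assembly.} The attempts are as follows. Wait until the walker steps onto a freshly added leaf at a new record distance; such a moment occurs within each $O(1)$ window with uniform positive probability, so the waiting time has a geometric tail. Then run the attempt. On failure, we restart at the return time, and at that moment the walker is below its record. We therefore need a new record leaf to be reached within an exponentially-tailed time, which Step 1 provides via the strong Markov property on $\mathcal{F}_n$. Then $\tau_1\le\sum_{i\le G}(W_i+F_i)$, where $G$ is geometric with parameter $\gamma$, and the $W_i,F_i$ are stochastically dominated, conditionally on the past, by variables with uniform exponential moments. This gives $E_Q e^{\lambda\tau_1}<\infty$ uniformly for small $\lambda$, and Markov's inequality gives the theorem.

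The main obstacle is Step 1: an exponential, not stretched exponential, lower deviation for $\D{n}$ that holds uniformly from arbitrary initial trees, since the previous work only obtained $e^{-cn^{1/8}}$. The difficulty is that the walker can fall back into a large previously built region near the root. I would try to avoid this by using only the fresh-subtree structure: after each record leaf, everything below is built by the walker itself. I would then argue via the regeneration-free block argument that from a fresh leaf the walker reaches depth $L$ before returning with probability at least $1-e^{-cL}$, and bound backtracking depth by a geometric variable.
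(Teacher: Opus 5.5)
Your Steps 2 and 3 are essentially the paper's proof. Your attempts play the role of the record-leaf times $\zeta_k$, and your $G$ is the index $K$ of \eqref{eq:Kdef}, which Lemma~\ref{l:Kdist} shows is exactly geometric with parameter $P_Q(H_o=\infty)$. Your $F_i+W_i$ is $\Delta\zeta_j$ on $\{\hitfetak_{j-1}<\infty\}$; its uniform exponential tail is Lemma~\ref{l:Deltaz}, which is fed into the conditional exponential-moment bound of Lemma~\ref{l:exp}.

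In Step 3 you should make one point explicit. The wait $W_i$ after a failure depends on the depth $M_i$ of the failed excursion. So it is only dominated conditionally on the past at the \emph{start} of attempt $i$, via $M_i\le F_i$; this is the role of Lemma~\ref{l:Mk} in the paper.

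Given Step 1, your derivation of $\gamma(\kappa)$ (force a few steps down, then a union bound over return times) is a valid self-contained replacement for the bound $P_Q(H_o<\infty)\le P_\kappa(H_o<\infty)<1$, which the paper imports from \cite[Lemma~1]{ribeiro2023}. Bounding the return time directly also controls the length $\hitfetak_{j-1}-\zeta_{j-1}$ of a failed excursion. The paper's proof of Lemma~\ref{l:Deltaz} addresses this only through the depth $M_{j-1}$.

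The gap is Step 1, which you single out as the main obstacle and propose to prove from scratch. Your sketch does not work:
\begin{itemize}
\item Your mechanism (add a leaf, step onto it) only guarantees that a block of $L$ steps descends $L/2$ fresh levels with probability exponentially small in $L$, of order $(\kappa/2)^{L}$. A failed block can lose up to $L$ levels, so nothing forces the block increments to have positive mean. Controlling backtracking in failed blocks is the entire difficulty.
\item Your fallback is false as stated. From a fresh leaf the walker returns to its father with probability bounded away from $0$: for $Q=\delta_1$ the very first step does so with probability $1/2$. So reaching depth $L$ first cannot have probability $1-e^{-cL}$.
\end{itemize}

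No new argument is needed, though. Step 1 is exactly what the coupling with a right-biased walk provides, namely \cite[Lemma~5.12]{figueiredo2017building}, with $r=r(\kappa)$ uniform over $\mathcal{Q}_\kappa$ and initial states by \cite[Lemma~5]{ribeiro2023}. Sample at the times $\sigma^{(r)}_k$, whose gaps are at most $e^{\sqrt r}$, and apply a large-deviation bound for $S_k$. This gives $P_{T_0,X_0;Q}(\D{n}-\D{0}\le\delta n)\le e^{-cn}$, and it is what the paper uses in Lemmas~\ref{l:Mk} and~\ref{l:Deltaz}.

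So the obstacle is misplaced. The earlier $e^{-cn^{1/8}}$ bound came from a union bound over the number of leaf visits, not from a lack of exponential ballisticity. The improvement to an exponential tail comes from the geometric decomposition, which you already have.
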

In \cite{ribeiro2023}, the authors showed a stretched exponential tail bound for $\tau_1$, that is, their Theorem 1 guarantees that
\begin{equation}\label{eq:stretched}
\sup_{Q\in \mathcal{Q}_\kappa}P_Q(\tau_1 \ge n) \le e^{-cn^{1/8}},    
\end{equation}
for some constant $c$ depending on $\kappa$ only. The way the above bound is obtained involves keeping track on the times the walker visits leaves at a further level and the times it returns to the father of those leaves. Then the event $\{\tau_1 > n\}$ is rewritten as a disjoint union of events involving how many times the walker visited leaves before time $n$. An union bound is then applied to obtain the above bound.

Our approach to prove Theorem \ref{t:tautail} overcomes the union bound done in \cite{ribeiro2023}. This new faster decay is possible due to a new decomposition of $\tau_1$ which we describe at Section \ref{s:prooftautail}. The key aspects of our decomposition are: whenever the walker does an excursion and returns to the father of a vertex, these excursions cannot be too long, otherwise, it is too expensive for the walker to make its way to the father of a vertex; thus, the walker is not too far way from a leaf, which means that it does not have to wait too long in order to have a new chance to regenerate from a leaf again. The upshot is that it is then possible to write $\tau_1$ as a random sum of random variables with exponentially light tails, where this random index obeys a geometric distribution. 

In \cite{figueiredo2017building} the authors showed that TBRW with $Q = {\rm Ber}(p)$ for $p \in (0,1]$ has a well-defined speed, that is, there exists a positive constant $v = v(p)$ such that
$$
\lim_{n\to \infty }\frac{\D{n}}{n} = v(p),
$$
almost surely. In \cite{ribeiro2023}, the authors generalize the above result and show that a TBRW with a leaf distribution $Q \in \mathcal{Q}_\kappa$ for some positive $\kappa$ has a well-defined speed as well, that is,
\begin{equation}\label{eq:vQ}
\lim_{n\to \infty }\frac{\D{n}}{n} = v(Q),
\end{equation}
almost surely, for some constant $v$ that depends only on $Q$.

In our second main result, we leverage the bound given by our Theorem \ref{t:tautail} and show an exponential concentration result for $\D{n}$ which also works uniformly on $\mathcal{Q}_\kappa$. That is, we show that the probability that $\D{n}$ deviates from $v(Q)n$ by $\varepsilon n$ decays exponentially fast in $n$. More specifically, we show the following result.
\begin{theorem}[Exponential concentration]\label{t:ld} Consider a TBRW with leaf distribution $Q \in \mathcal{Q}_{\kappa}$ for a fixed $\kappa \in (0,1]$. Then, there exists a positive constant $c=c(\varepsilon)$ such that
$$
\sup_{Q \in \mathcal{Q}_\kappa}P_Q\left(\left| \frac{\D{n}}{n} - v(Q)\right|>\varepsilon \right) \le e^{-cn}.
$$    
\end{theorem}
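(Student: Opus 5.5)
We derive Theorem~\ref{t:ld} from Theorem~\ref{t:tautail} through the renewal structure of \cite{ribeiro2023}. Let $\tau_1<\tau_2<\cdots$ be the successive renewal times. By \cite{ribeiro2023}, the pairs $(\tau_{k+1}-\tau_k,\ \D{\tau_{k+1}}-\D{\tau_k})_{k\ge1}$ are i.i.d. Their common law is that of $(\tau_1,\D{\tau_1})$ under the TBRW started from a single edge and conditioned on never returning to the root; call this conditioned measure $\bar P_Q$. The speed then satisfies $v(Q)=\bar E_Q[\D{\tau_1}]/\bar E_Q[\tau_1]$.

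\textbf{Step 1: tails of the epochs.} The probability of never returning to the root is bounded below by some $\delta(\kappa)>0$ uniformly in $Q\in\mathcal{Q}_\kappa$; this is the ballisticity estimate of \cite{IRVZ22,ribeiro2023}. Hence Theorem~\ref{t:tautail} gives $\bar P_Q(\tau_1\ge n)\le \delta^{-1}e^{-cn}$. Since $\D{\tau_1}\le \tau_1+1$, the increments $\xi_k=\tau_{k+1}-\tau_k$ and $\eta_k=\D{\tau_{k+1}}-\D{\tau_k}$ have exponential moments in a neighbourhood of $0$. These moments are bounded uniformly over $\mathcal{Q}_\kappa$, and $\bar E_Q[\tau_1]\ge1$.

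\textbf{Step 2: reduction to i.i.d.\ deviations.} Let $K_n=\max\{k:\tau_k\le n\}$ and write $\mu_\tau=\bar E_Q\xi$, $\mu_D=\bar E_Q\eta$. On the event that the walker never goes below level $\D{\tau_1}$ after $\tau_1$, the quantity $|\D{n}-\D{\tau_{K_n}}|$ is at most $n-\tau_{K_n}$, which is at most the length of the epoch straddling $n$. We bound the probability of three bad events.

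(a) $\tau_1>\eta' n$ for a small $\eta'$. This has probability at most $e^{-c\eta' n}$ by Theorem~\ref{t:tautail}.

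(b) Some epoch starting before time $n$ has length at least $\eta' n$. By a union bound over at most $n$ epochs, together with Step 1, this has probability at most $n\delta^{-1}e^{-c\eta' n}$.

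(c) $\bigl|\sum_{k<m}\eta_k-m\mu_D\bigr|>\eta' n$ or $\bigl|\sum_{k<m}\xi_k-m\mu_\tau\bigr|>\eta' n$ for some $m\le n$. By Cramér/Chernoff bounds with uniform exponential moments, and a union bound over $m\le n$, this has probability at most $n e^{-c'n}$.

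Outside these events, $K_n\mu_\tau = n+O(\eta' n)$ and $\D{n}=K_n\mu_D+O(\eta' n)$. Hence $\D{n}/n=\mu_D/\mu_\tau+O(\eta')$, and choosing $\eta'$ small in terms of $\varepsilon$ gives the claim. The $\mu_\tau^{-1}$ factor is harmless because $\mu_\tau\ge1$ and $\mu_D\le \mu_\tau+1$.

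\textbf{Step 3: uniformity.} The Chernoff rate for a mean-zero variable $Z$ with $E e^{\lambda_0|Z|}\le M$ is bounded below by a function of $(\lambda_0,M,\eta')$ alone. This follows from $\log Ee^{\lambda Z}\le C(M,\lambda_0)\lambda^2$ for $|\lambda|\le\lambda_0/2$. Thus all the constants above depend only on $\kappa$ and $\varepsilon$.

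The main obstacle I expect is Step 1. We must transfer Theorem~\ref{t:tautail}, stated under $P_Q$ for the first epoch, to the conditioned law governing later epochs. This requires a uniform positive lower bound on the non-return probability over $\mathcal{Q}_\kappa$. The cleanest route is to use the identity $\bar P_Q(\cdot)=P_Q(\cdot\mid \D{t}\ge1\ \forall t)$ from \cite{ribeiro2023} and then invoke the uniform transience estimate.
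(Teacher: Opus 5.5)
Your proposal is correct and follows essentially the paper's route. Both arguments use the same ingredients: the i.i.d.\ renewal epochs of \cite{ribeiro2023}; exponential moments for the conditioned epochs, obtained from Theorem~\ref{t:tautail} together with the uniform bound $P_Q(H_o=\infty)\ge P_\kappa(H_o=\infty)>0$; and Chernoff bounds with constants uniform over $\mathcal{Q}_\kappa$, applied to the decomposition of $\D{n}$ into the first epoch, the complete epochs and the straddling epoch. The only difference is bookkeeping: you control the random renewal count $K_n$ by union bounds over all $m\le n$ (and over all epochs for the straddling one), whereas the paper first proves concentration of $N(t)$ (Lemma~\ref{l:N_ld_bound}) and then replaces $N(t)$ by a deterministic index.
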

Our final main result is about the regularity of the speed $v(p)$ as a function of $p$ in the Bernoulli case. In \cite{ribeiro2023}, the authors showed that $v(p)$ is continuous on $p$. Here, we substantially improve their result by showing that $v(p)$ is actually an analytic function of $p$. More precisely, we have the following result.
\begin{theorem}[The speed is analytic]\label{t:v}
Consider a TBRW with $Q = {\rm Ber}(p)$, for $p \in (0,1]$. Then, the speed $v(p)$ is analytic on $p$.    
\end{theorem}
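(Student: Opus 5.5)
We follow the percolation-style strategy of \cite{grego}. Express $v(p)$ through the renewal structure as a ratio of two power series in $p$, extend each term to complex $p$ in a neighbourhood of any fixed $p_0\in(0,1]$, and use the exponential tail from Theorem~\ref{t:tautail} to get uniform convergence via the Weierstrass $M$-test. From \cite{ribeiro2023}, the epochs after $\tau_1$ are i.i.d.\ with the law of $\tau_1$ under the conditioned measure $\bar P_p(\cdot)=P_p(\cdot\mid \D{t}\ge 1\ \forall t\ge 1)$ (or the analogous no-return event). This gives
$$
v(p)=\frac{\bar E_p[\D{\tau_1}]}{\bar E_p[\tau_1]} = \frac{E_p[\D{\tau_1}\mathds{1}_{A}]}{E_p[\tau_1\mathds{1}_{A}]},
$$
where $A$ is the no-backtrack event. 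The factor $P_p(A)$ cancels between numerator and denominator, so it does not matter that $P_p(A)$ is not given by a finite expression. The denominator is at least $P_p(A)>0$. Since analyticity is preserved under quotients with nonvanishing denominator, it suffices to show that $N(p)=E_p[\D{\tau_1}\mathds{1}_A]$ and $D(p)=E_p[\tau_1\mathds{1}_A]$ are analytic.

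The first step is to make the events finite-dimensional. The event $\{\tau_1=n\}\cap A$ depends on the whole future through the condition "$\D{t}\ge \D{n}$ for all $t>n$". We therefore split it as $\{\tau_1 = n, \text{no return to level } \D{n}-1 \text{ before time } m\}$ and let $m\to\infty$ along a telescoping sum. Concretely, write
$$
D(p)=\sum_{n}\sum_{m\ge n} n\,\bigl(P_p(G_{n,m})-P_p(G_{n,m+1})\bigr),
$$
where $G_{n,m}$ is the cylinder event that the conditions defining $\tau_1=n$ and $A$ hold up to time $m$. The differences are probabilities of events $F_{n,m}$ determined by the first $m+1$ steps: the walker first violates the condition exactly at time $m+1$. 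Each such probability is a finite sum over trajectories $(T_0,X_0),\dots,(T_{m+1},X_{m+1})$. For each trajectory the probability is a product of factors $p$ or $1-p$, one per step according to whether a leaf was added, times $1/\deg$ factors that do not depend on $p$. Hence $f_{n,m}(p):=P_p(F_{n,m})$ is a polynomial in $p$ of degree at most $m+1$. It extends to an entire function $f_{n,m}(z)$, obtained by replacing $p$ with $z$ in every trajectory weight.

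The main step, and the main obstacle, is the uniform bound: for $z$ in a small complex disc $B(p_0,\delta)$ we need $\sum_{n,m} n\,|f_{n,m}(z)|<\infty$ uniformly. The plan is the standard estimate
$$
|z|\le p\Bigl(1+\tfrac{\delta}{p}\Bigr),\qquad |1-z|\le (1-p)\Bigl(1+\tfrac{\delta}{1-p}\Bigr)
$$
with $p=p_0$ real. This gives $|f_{n,m}(z)|\le (1+\eta)^{m+1}P_{p_0}(F_{n,m})$ with $\eta=\eta(\delta,p_0)\to0$ as $\delta\to0$. When $p_0=1$ the $(1-z)$ factors are absent, and the bound holds on a one-sided neighbourhood, which is all that is needed at the endpoint. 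We then need $P_{p_0}(F_{n,m})\le Ce^{-c m}$. For $n$ large this comes from Theorem~\ref{t:tautail}, since $\tau_1\ge n$. For $m\gg n$ it requires showing that a return to the father of the regeneration leaf at time exactly $m+1$ is exponentially unlikely. This is where real work is needed. I would obtain it by the same ballisticity/excursion estimates used to prove Theorem~\ref{t:tautail}: backtracking a distance after time $m$ costs exponentially in $m$, because by time $m/2$ the walker is at linear distance with overwhelming probability. That is an exponential estimate on the height of the walker, which follows from the decomposition of Section~\ref{s:prooftautail} (or from Theorem~\ref{t:ld} applied on $\mathcal{Q}_\kappa$ with $\kappa=p_0/2$). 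Choosing $\delta$ so small that $(1+\eta)e^{-c}<1$, the series $\sum_{n,m} n(1+\eta)^{m+1}Ce^{-cm}$ converges, with the sum over $n\le m$ contributing a polynomial factor. The Weierstrass $M$-test then gives analyticity of $D$ on the disc, and the same argument gives analyticity of $N$ because $\D{\tau_1}\le\tau_1$. Since $D(p_0)>0$, $v=N/D$ is analytic near every $p_0\in(0,1]$.
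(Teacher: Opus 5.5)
Your route differs from the paper's. The paper never expands the future-dependent condition. It uses the Markov property, plus the fact that a walker at a bottom leaf returns to that leaf's father with the same law as a fresh walker returns to $o$, to factor $P_p(\tau_1=n,H_o=\infty)=P_p(A_n)P_p(H_o=\infty)$ with $A_n\in\mathcal{F}_n$ and $A_n\subseteq\{\tau_1\ge n\}$ (Lemma~\ref{l:tau_analytics}). It then proves separately that $P_z(H_o=\infty)=1-\sum_k P_z(H_o=k)$ is analytic, via $\{H_o=k\}\subseteq\{\tau_1>k\}$ (Lemma~\ref{l:Ho}). Your first-violation expansion replaces this by a double series of cylinder probabilities and dispenses with the factorization, which is a legitimate alternative. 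At interior $p_0$ your centred comparison $|f_{n,m}(z)|\le(1+\eta)^{m+1}P_{p_0}(F_{n,m})$ is also fine and needs no uniformity in $p$. As written, however, the proposal has three defects.

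\emph{The telescoping identity is wrong.} Since $G_{n,m}\downarrow\{\tau_1=n\}\cap A$, one has $\sum_{m\ge n}\bigl(P_p(G_{n,m})-P_p(G_{n,m+1})\bigr)=P_p(G_{n,n})-P_p(\{\tau_1=n\}\cap A)$. So your double sum equals $\sum_n n\,P_p\bigl(G_{n,n}\setminus(\{\tau_1=n\}\cap A)\bigr)$, not $D(p)$. The correct expansion is $P_p(\tau_1=n,A)=P_p(G_{n,n})-\sum_{m\ge n}P_p(F_{n,m})$. By the Markov property this equals $P_p(G_{n,n})P_p(H_o=\infty)$, i.e.\ the paper's factorization in disguise. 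You must then also bound $\sum_n n|P_z(G_{n,n})|$, which is fine since $G_{n,n}\subseteq\{\tau_1\ge n\}$.

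\emph{The step you call the main obstacle is immediate.} On $F_{n,m}$ every candidate time up to $m+1$ fails. Those before $n$ fail before $n$, and $n$ fails at $m+1$. A candidate $n''\in(n,m]$ would need $\D{m+1}\ge\D{n''}>\D{n}$, which is false. Finally $m+1$ has $\D{m+1}<\D{n}$, so it is not a new maximum. Hence $F_{n,m}\subseteq\{\tau_1>m+1\}$, and Theorem~\ref{t:tautail} gives $P_{p_0}(F_{n,m})\le e^{-c(m+1)}$ directly, exactly as the paper uses $\{H_o=k\}\subseteq\{\tau_1>k\}$. Your version (using $\tau_1\ge n$ for large $n$ and ballisticity for $m\gg n$) only works after a split $n\gtrless\varepsilon m$, together with a lower-deviation bound on $\D{m+1}$ from Theorem~\ref{t:ld}. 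The heuristic ``backtracking costs $e^{-cm}$'' is false when $n$ is close to $m$, since going back one level is then cheap.

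\emph{The endpoint $p_0=1$ is not covered.} Your comparison divides by $1-p_0$. The monomials containing powers of $(1-z)$ are still present in $f_{n,m}(z)$ for $z\neq 1$, and a ``one-sided neighbourhood'' does not yield a convergent power series at $1$. The paper's Lemma~\ref{l:An} handles this by comparing with the interior point $p_0-\delta$. From $|1-z|\le 1-(p_0-\delta)$ and $|z|\le p_0+\delta$ one gets
\[
|f_{n,m}(z)|\le\Bigl(\frac{p_0+\delta}{p_0-\delta}\Bigr)^{m+1}P_{p_0-\delta}(F_{n,m}).
\]
This requires the rate in Theorem~\ref{t:tautail} to be uniform in $p$ near $p_0$, which is what uniformity over $\mathcal{Q}_\kappa$ provides.

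With these three repairs your argument goes through.
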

The rest of this paper is organized as follows: at Section \ref{s:prooftautail} we introduce the renewal structure formally and show Theorem \ref{t:tautail}. At Section \ref{s:ld}, we show Theorem \ref{t:ld}. At Section \ref{s:analyticity}, we show Theorem \ref{t:v}. We then conclude in Section \ref{s:open} with a discussion of open problems arising from this work.

\section{Proof of Theorem \ref{t:tautail}: Exponential tail bounds for $\tau_1$ }\label{s:prooftautail}
 In this part, we will define the renewal structure and show that the first renewal time has exponential tail. 

Before we define the renewal structure, let us say a couple words about the markovian nature of the TBRW. Observe that the TBRW itself is a Markov process on the space of rooted trees, even though each of its components separately are not. In some proofs, it will be useful to make use of the (Strong) Markov Properties. For that, we will denote the shift operator of length $n$ by $\theta_n$. In light of the Markov Properties, it is important to point out that when the TBRW is shifted by $n$ steps and $\Law_n = Q$ for all $n$, the shifted process is a TBRW that starts from a random initial state $(T_n,X_n)$ and with the exact same sequence of laws $\{\Law_n\}_n$. More formally, if $\phi: \Omega \to \mathbb{R}$ is $\mathcal{F}$-measurable, then the Simple Markov Property works as follows
$$
E_{T_0,X_0;Q}\left [\phi\circ \theta_n \; \middle | \; \mathcal{F}_n\right] = E_{T_n,X_n;Q}\left [\phi\right].
$$

\subsection{The Renewal Structure}
Recall the definition of $\tau_1$ in \eqref{def:tau1}. The renewal structure is then defined inductively
\begin{equation}\label{eq:tauk}
\tau_k := \tau_1 \circ \theta_{\tau_{k-1}} + \tau_{k-1}.
\end{equation}
Given a vertex $x$, we will write $H_x$ for the hitting time to $x$, that is, the first time $n$ when $X_n = x$. We will keep track on the times the walker jumps to leaves at maximal distance from the root and also when it backtracks to the father of these special leaves. In order to do so, we will introduce two sequences of stopping times whose definition depend on each other. We start defining 
$$
\zeta_0 \equiv 0; \quad \hitfetak_0 = H_{f(X_0)}.
$$
Then, we let $\zeta_1$ be the following stopping time
$$
\zeta_1 = \inf \{ n> \hitfetak_0 \; : \;\Deg{n} = 1, \D{n} = h(T_n), \D{n}>\D{0} \}.
$$
In words, $\zeta_1$ happens when the following conditions take place: after the walker visits the father of $X_0$, it visits a leaf with maximal distance from the root, this leaf also having a greater distance from the root than the previous leaf, $X_0$. Note that never visiting the father of $X_0$ again would imply that $\tau_1$ happens at time $0$.

We then put $\hitfetak_1$ to be
\begin{equation}
    \hitfetak_1 := \inf\Lbrace n > \zeta_1 \; : \; X_n = f\left( X_{\zeta_1} \right)\Rbrace,
\end{equation}
with the convention that the infimum over the empty set is infinity. In words, $\hitfetak_1$ is the first time the walker returns to the father of $X_{\zeta_1} $ after stepping on $X_{\zeta_1} $ for the first time. When the walker is transient, $\hitfetak_1$ can be infinity.

Then, we define $\zeta_k$ and $\hitfetak_k$ for $k>1$ inductively as follows
\begin{equation}
    \label{eq:etakdef}
    \zeta_k = \begin{cases}
                \zeta_1 \circ \theta_{\hitfetak_{k-1}} + \hitfetak_{k-1}, & \text{ if } \hitfetak_{k-1} <\infty  \\
                \infty, & \hitfetak_{k-1} = \infty 
            \end{cases},
\end{equation}
again with the convention that $\inf\emptyset = \infty$.
\begin{equation}
    \label{eq:hitfkdef}
    \hitfetak_k = H_{f(X_0)} \circ \theta_{\zeta_{k}} + \zeta_{k}. 
\end{equation}
In words, $\zeta_k$ happens when, after visiting the father of $X_{\zeta_{k-1}}$, the walker visits a leaf at maximal distance from the root, and even further away from the root then the previous leaf visit at $\zeta_{k-1}$. Furthermore, $\hitfetak_k$ is the first time the walker returns to the father of $X_{\zeta_{k}}$.

Recall that $\tau_1$ is defined only at times for which a unique new maximum for the distance between the walker and the root is achieved. The times $(\zeta_k)_{k \geq 1}$ are the ones in which a regeneration could happen, but if $\tilde{H}_k$ is finite, then the regeneration fails. With that in mind, we define $K$ to be the random index
\begin{equation}\label{eq:Kdef}
    K = \inf\{ k \ge 0 \; : \; \hitfetak_k = \infty\},
\end{equation}
and notice that we can define $\tau_1$ as
\begin{equation}
    \label{eq:tauetakdef}
    \tau_1 := \zeta_{K} = \sum_{j=1}^K\zeta_j -\zeta_{j-1}.
\end{equation}
The fact that we can write $\tau_1$ as the above sum will be crucial to us. This new standpoint is already telling us that the tail of $\tau_1$ should be as light as the tails of $\zeta_j - \zeta_{j-1}$ and $K$ provided they behave well enough. The next two lemmas go exactly in this direction dealing with the distributions of $K$ and $\zeta_j - \zeta_{j-1}$.
\begin{lemma}\label{l:Kdist}Let $K$ be as in \eqref{eq:Kdef}. Then, for all $Q \neq \delta_0$, $K$ obeys a geometric distribution of parameter $P_Q(H_o = \infty)$.
    
\end{lemma}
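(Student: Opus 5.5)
We will show that, for every $k\ge 1$,
$$
P_Q\left(\hitfetak_k=\infty \,\middle|\, \mathcal{F}_{\zeta_k},\ \zeta_k<\infty\right)=P_Q(H_o=\infty),
$$
and likewise for $k=0$. Once this holds, the events $\{\hitfetak_k<\infty\}$ behave like independent trials with common failure probability $P_Q(H_o=\infty)$. Multiplying the conditional probabilities along the chain gives $P_Q(K\ge k+1)=P_Q(H_o<\infty)^{k+1}$, which is the claimed geometric law with success parameter $P_Q(H_o=\infty)$. The index shift, i.e. whether $K$ starts at $0$ or $1$, has to be read off from \eqref{eq:Kdef}. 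We also need $\zeta_k<\infty$ almost surely on $\{\hitfetak_{k-1}<\infty\}$; this follows because the walk is transient and ballistic (\cite{IRVZ22}) and, by uniform ellipticity, keeps creating leaves at new maximal heights.

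\textbf{Key step: strong Markov at $\zeta_k$ and a tree-surgery identification.} At time $\zeta_k$ the walker sits at a leaf $X_{\zeta_k}$ of maximal height in $T_{\zeta_k}$. By the Simple/Strong Markov Property, the future is a TBRW started from $(T_{\zeta_k},X_{\zeta_k})$, and $\hitfetak_k-\zeta_k$ is its hitting time of $f(X_{\zeta_k})$.

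Until that hitting time, the walker is confined to the subtree hanging below $X_{\zeta_k}$. Initially this subtree is just the single leaf $X_{\zeta_k}$, attached by one edge to $f(X_{\zeta_k})$. During this confinement the dynamics does not depend on the rest of $T_{\zeta_k}$, since vertices are added only at the walker's position and moves use only the local neighbourhood. Hence the law of the hitting time of $f(X_{\zeta_k})$ equals the law of $H_o$ under $P_Q$, where the initial state is a single edge $\{o,X_0\}$ with the walker at the non-root tip. Formally, we couple the two processes step by step using the same leaf counts and the same uniform choices. The two trees agree, via the obvious isomorphism, on the component containing the walker, and they agree up to and including the first visit to the father/root.

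This gives the displayed identity for $k\ge1$. For $k=0$ it holds trivially, because the initial state is exactly that edge and $\hitfetak_0=H_{f(X_0)}=H_o$.

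\textbf{Nondegeneracy and the main obstacle.} We need $P_Q(H_o=\infty)\in(0,1]$, so that $K$ is a.s. finite and truly geometric. Positivity follows from transience: for $Q\neq\delta_0$ the walk is transient by \cite{IRVZ22}, at least in the uniformly elliptic case $\mathcal{Q}_\kappa$, which any $Q\neq\delta_0$ belongs to with $\kappa=Q(\{1,2,\dots\})$. If the walk never hits $o$ with positive probability, this then propagates.

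The main obstacle is making rigorous the statement that the excursion below $X_{\zeta_k}$ ignores the rest of the tree. We must check two things: that $\zeta_k$ is a genuine stopping time, and that the conditioning on $\mathcal{F}_{\zeta_k}$ gives no information about the subsequent leaf-addition variables. Both hold because the leaf counts are i.i.d. $Q$ and independent of the past. Note also that $\zeta_{k+1}$ is defined only through $\hitfetak_k$ via $\theta_{\hitfetak_k}$, so iterating the strong Markov property at the alternating stopping times yields the product formula directly.
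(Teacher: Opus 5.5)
Your proposal is correct and follows essentially the same route as the paper. Both arguments apply the strong Markov property at $\zeta_k$ and identify the probability of returning to the father of a leaf with $P_Q(H_o<\infty)$; this is the paper's identity \eqref{eq:smpho}, which you additionally justify via the subtree coupling. Both then induct or multiply along the alternating stopping times. One difference works in your favour: you include $\hitfetak_0 = H_o$ and obtain a geometric law on $\{0,1,\dots\}$, which is more faithful to the literal definition \eqref{eq:Kdef} than the paper's induction, whose product starts at $\hitfetak_1$.
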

To avoid clutter with the notation, we will make use of the $\Delta$ notation for the difference between two random variables in a sequence. That is, for all $k \in \mathbb{N}$, we write
\begin{equation}\label{eq:delta}
    \Delta \zeta_k := \zeta_k - \zeta_{k-1}.
\end{equation}
\begin{lemma}\label{l:exp} Fix $\kappa \in (0,1]$. Then there exists $t_0 = t_0(\kappa)>0$ such that for all $t<t_0$ and $j\ge 1$ 
    $$
    \sup_{Q \in \mathcal{Q}_\kappa}E_Q \left[\exp\Lbrace t\Delta \zeta_j \mathds{1}\{\hitfetak_{j-1} < \infty \} \Rbrace \; \middle | \mathcal{F}_{\zeta_{j-1}}\; \right] \le e^{t}\left[P_\kappa(H_o < \infty) + \epsilon(t)\right],
    $$
    where $\epsilon(t)$ is a function of $t$ depending on $\kappa$ only, satisfying $\epsilon(t)\to 0$, when $t\to 0$.

\end{lemma}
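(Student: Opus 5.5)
We need a plan for Lemma \ref{l:exp}. On the event $\{\hitfetak_{j-1}<\infty\}$ the increment splits as $\Delta\zeta_j = (\hitfetak_{j-1}-\zeta_{j-1}) + (\zeta_j - \hitfetak_{j-1})$. The first piece is the duration of the excursion from the leaf $X_{\zeta_{j-1}}$ back to its father, on the event that this return happens. The second piece is the time needed, starting from the father, to reach a new leaf at maximal height strictly beyond the previous one. We bound the two pieces separately and then combine them with Cauchy--Schwarz, or more simply with the strong Markov property at $\hitfetak_{j-1}$, which conditionally factorizes the exponential moment.

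\textbf{First piece: the returning excursion.} By the strong Markov property at $\zeta_{j-1}$, the process is a TBRW started at a leaf $X_{\zeta_{j-1}}$ of degree one. Its first step forces it either to add leaves or to move to the father. Up to the initial step (this is the source of the factor $e^t$), what we must control is
$$E\left[e^{tH}\mathds{1}\{H<\infty\}\right],$$
where $H$ is the return time to the father. Note that the tree below that vertex is irrelevant, since only the subtree hanging from the leaf matters. The subtree grown from the leaf is a fresh TBRW started from an edge, whose root plays the role of $o$, so $H$ has the law of $H_o$ under $P_Q$, up to one step. We then want
$$\sup_Q E_Q\left[e^{tH_o}\mathds{1}\{H_o<\infty\}\right]\le P_\kappa(H_o<\infty)+\epsilon(t).$$
For this we need a uniform exponential tail $\sup_{Q\in\mathcal{Q}_\kappa}P_Q(n\le H_o<\infty)\le Ce^{-cn}$. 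The argument is that in $n$ steps the walker is ballistic: by the uniform ballisticity estimates of \cite{IRVZ22}, its distance is at least $\delta n$ except with exponentially small probability. Returning to $o$ from distance $\delta n$ requires backtracking against the drift created by leaf additions. This costs $e^{-c\delta n}$, since from every vertex the probability of stepping toward the root is at most that of a walk with a uniform outward bias (each newly added leaf increases the degree). A dominated-convergence or Abel-summation argument then gives
$$E\left[e^{tH_o}\mathds{1}\{H_o<\infty\}\right]\le P(H_o<\infty)+\sum_n (e^{tn}-1)Ce^{-cn},$$
and the sum tends to $0$ uniformly in $Q$ as $t\to0$. Finally, $P_Q(H_o<\infty)\le P_\kappa(H_o<\infty)$ follows from monotonicity in $\kappa$ (where $P_\kappa$ denotes the extremal law in $\mathcal{Q}_\kappa$, e.g. $\mathrm{Ber}(\kappa)$), via a coupling in which more leaves make return less likely.

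\textbf{Second piece: reaching a new maximal leaf.} Started from the father, the walker must produce a new leaf at maximal height beyond the previous record. Every step has probability at least $\kappa$ of adding a leaf, and the walker is ballistic, so within a time window of length $m$ it either reaches a fresh maximal height or stalls. Whenever it sits at a vertex of maximal height, it adds a leaf and steps onto it with probability bounded below by a constant depending only on $\kappa$. We therefore show $\sup_Q P(\zeta_j-\hitfetak_{j-1}>n\mid\mathcal{F}_{\hitfetak_{j-1}})\le Ce^{-cn}$ by chopping time into blocks and applying a uniform per-block success probability. This gives an exponential moment of the form $1+\epsilon(t)$.

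Multiplying the two pieces gives
$$e^t\,(P_\kappa(H_o<\infty)+\epsilon_1(t))(1+\epsilon_2(t)),$$
which is of the required form. The main obstacle is the uniform exponential tail of $H_o$ on $\{H_o<\infty\}$, because the environment is self-built. I would first attempt it using the ballisticity estimates with exponential bounds. If only stretched-exponential bounds are available, I would instead use the leaf-counting argument: each visit to a vertex adds leaves at rate $\kappa$, so the degree grows linearly along the path, which makes backtracking geometric.
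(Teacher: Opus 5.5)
Your route differs from the paper's. The paper proves a single tail bound for the whole increment on the return event, $P(\Delta\zeta_j>m,\hitfetak_{j-1}<\infty\mid\mathcal{F}_{\zeta_{j-1}})\le 2e^{-cm}$ (Lemma \ref{l:Deltaz}). It gets this from the depth bound of Lemma \ref{l:Mk} and a split on $\{M_{j-1}\le\varepsilon m\}$, and then sums over $m$. Your first piece, a uniform exponential tail for $H_o$ on $\{H_o<\infty\}$, is sound and even controls the duration of the excursion directly. Your per-step justification for it is not right: at a degree-one vertex where no leaf is added, the walker steps back with probability one. The exponential cost of backtracking does follow, however, from the block coupling of \cite[Lemma 5.12]{figueiredo2017building}, made uniform in \cite[Lemma 5]{ribeiro2023}.

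The gap is in the second piece: the bound $\sup_Q P(\zeta_j-\hitfetak_{j-1}>n\mid\mathcal{F}_{\hitfetak_{j-1}})\le Ce^{-cn}$ is false. During the excursion the walker reached depth $\D{\zeta_{j-1}}+M_{j-1}$, so $h(T_{\hitfetak_{j-1}})\ge\D{\zeta_{j-1}}+M_{j-1}$. Moreover, $\zeta_j$ must occur at a leaf at distance $h(T_n)\ge h(T_{\hitfetak_{j-1}})$. Starting from depth $\D{\zeta_{j-1}}-1$, this forces $\zeta_j-\hitfetak_{j-1}\ge M_{j-1}+1$ deterministically. Since $M_{j-1}$ is $\mathcal{F}_{\hitfetak_{j-1}}$-measurable and unbounded on the return event, the conditional exponential moment of the second piece is at least $e^{t(M_{j-1}+1)}$, not $1+\epsilon_2(t)$. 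So your product bound does not follow: both pieces grow with $M_{j-1}$, and controlling exactly this dependence is the role of Lemma \ref{l:Mk} in the paper.

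The repair fits your framework. The first time the walker reaches depth $h(T_{\hitfetak_{j-1}})+1$, it stands on a never-visited leaf of maximal height. Also $h(T_{\hitfetak_{j-1}})\le\D{\zeta_{j-1}}+M_{j-1}+1$. So the coupling gives $P(\zeta_j-\hitfetak_{j-1}>C(M_{j-1}+3)+n\mid\mathcal{F}_{\hitfetak_{j-1}})\le Ce^{-cn}$, and hence $E[e^{t(\zeta_j-\hitfetak_{j-1})}\mid\mathcal{F}_{\hitfetak_{j-1}}]\le e^{Ct(M_{j-1}+3)}(1+\epsilon_2(t))$. Because $M_{j-1}\le\hitfetak_{j-1}-\zeta_{j-1}$, the factor $e^{CtM_{j-1}}$ is absorbed by running your first-piece estimate at $(1+C)t$ instead of $t$. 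This still gives $P_\kappa(H_o<\infty)+o(1)$ as $t\to0$.

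Note also that you are (rightly) bounding $E_Q[e^{t\Delta\zeta_j}\mathds{1}\{\hitfetak_{j-1}<\infty\}\mid\mathcal{F}_{\zeta_{j-1}}]$. With the indicator inside the exponential, as literally written, the left-hand side is always at least $1$, so it cannot lie below $e^t[P_\kappa(H_o<\infty)+\epsilon(t)]$ for small $t$. The paper's summation \eqref{eq:expsum} likewise only controls the product form.
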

We will postpone the proof of the above lemmas to the end of this section. For now, we will focus on showing how Theorem \ref{t:tautail} follows from them.
\begin{proof}[Proof of Theorem \ref{t:tautail} (Exponential Tail Bounds for $\tau_1$)] 
Since $\tau_1 = \zeta_K$ and the events $\{K = k\}$ partition $\Omega$, since the walker is transient in our settings, we can decompose the probability of $\{\tau_1 \ge m\}$ as
\begin{equation}\label{eq:tausum}
    P_Q( \tau_1 \ge m) = \sum_{k=1}^\infty P_Q\left( \sum_{j=1}^{k}\Delta \zeta_j \ge  m, \; K = k\right).
\end{equation}
On the event $\{K = k\}$, we have $\hitfetak_0 < \infty, \dots, \hitfetak_{k-1} < \infty$ and $\hitfetak_k = \infty$, so in particular $\zeta_k < \infty$ (since in these settings the walker is transient) and the walker is at a leaf at time~$\zeta_k$, by definition of $\zeta_k$. Using these observations and shifting the process by $\zeta_k$, Strong Markov property yields
\begin{equation}\label{eq:smpho}
    P_Q\left( \hitfetak_{k} = \infty \; \middle | \; \mathcal{F}_{\zeta_k} \right)\mathds{1}\{\zeta_k <\infty\} = P_Q(H_o = \infty)\mathds{1}\{\zeta_k <\infty\}, \; P_Q\text{-a.s.}
\end{equation}
Applying the above identity at $\zeta_k$ on the event $\{K = k\}$ (where $\zeta_k < \infty$) and then dropping the conditions $\hitfetak_0 < \infty, \dots, \hitfetak_{k-1} < \infty$ gives the upper bound
\begin{equation}\label{eq:bound1}
    P_Q\left( \sum_{j=1}^{k}\Delta \zeta_j \ge  m, \; K = k\right) \le P_Q(H_o = \infty)\,P_Q\left( \sum_{j=1}^{k}\Delta \zeta_j \mathds{1}\{\hitfetak_{j-1} < \infty \}\ge  m\right).
\end{equation}
Notice that for any $t>0$, an application of Markov's inequality together with Lemma \ref{l:exp} $k$ times yields 
\begin{equation}
    \begin{split}
        P_Q\left( \sum_{j=1}^{k}\Delta \zeta_j \mathds{1}\{\hitfetak_{j-1} < \infty \}\ge  m\right) & = P_Q\left( \exp\Lbrace\sum_{j=1}^{k}t\Delta \zeta_j \mathds{1}\{\hitfetak_{j-1} < \infty \} \Rbrace \ge  e^{tm}\right) \\
        & \le e^{-tm}E_Q \left[\exp\Lbrace\sum_{j=1}^{k}t\Delta \zeta_j \mathds{1}\{\hitfetak_{j-1} < \infty \} \Rbrace\right] \\
        & \le e^{-tm}e^{tk}\left[P_\kappa(H_o < \infty) + \epsilon(t)\right]^k.
    \end{split}
\end{equation}
Choosing $t$ small enough so that $e^{t}\left[P_\kappa(H_o < \infty) + \epsilon(t)\right] < 1$, we obtain 
\begin{equation}
    \begin{split}
    \sum_{k=1}^\infty P_Q\left( \sum_{j=1}^{k}\Delta \zeta_j \mathds{1}\{\hitfetak_{j-1} < \infty \}\ge  m\right) & \le e^{-tm}\sum_{k=1}^\infty e^{tk}\left[P_\kappa(H_o < \infty) + \epsilon(t)\right]^k\\
    & \le \frac{e^{-tm}e^{t}\left[P_\kappa(H_o < \infty) + \epsilon(t)\right]}{1-e^{t}\left[P_\kappa(H_o < \infty) + \epsilon(t)\right]}.
    \end{split}
\end{equation}
Plugging the above inequality back into \eqref{eq:bound1} and using \eqref{eq:tausum} yields
$$
P_Q(\tau_1 \ge m) \le \frac{e^{-tm}e^{t}P_\kappa(H_o = \infty)\left[P_\kappa(H_o < \infty) + \epsilon(t)\right]}{1-e^{t}\left[P_\kappa(H_o < \infty) + \epsilon(t)\right]},
$$
which is enough to prove the theorem for a suitable adjustment of the constants.
\end{proof}
\subsection{Proofs of Lemmas \ref{l:Kdist} and \ref{l:exp}}
We begin with proof of Lemma \ref{l:Kdist} which is a consequence of Strong Markov property.
\begin{proof}[Proof of Lemma \ref{l:Kdist}] For any fixed $k \in \mathbb{N}$, the definition of $K$ in \eqref{eq:Kdef} gives us that
\begin{equation}
    P_Q(K=k) = P_Q(\hitfetak_1 <\infty, \dots, \hitfetak_{k-1} < \infty, \hitfetak_k = \infty).
\end{equation}
    Since the $\hitfetak$'s are stopping times, using \eqref{eq:smpho} leads us to
    $$
        P_Q(\hitfetak_1 <\infty, \dots, \hitfetak_{k-1} < \infty, \hitfetak_k = \infty) = P_Q(\hitfetak_1 <\infty, \dots, \hitfetak_{k-1} < \infty)P_Q( H_o = \infty).
    $$
    To finish the proof, recall that at time $\zeta_{k-1}$, the walker is at a leaf of maximal distance from the root in $T_{\zeta_{k-1}}$. Also that $\hitfetak_i < \hitfetak_{i+1}$ on the event where $\hitfetak_i$ is finite. Thus, using Strong Markov Property again by shifting the process by $\zeta_{k-1}$
    $$
        P_Q(\hitfetak_1 <\infty, \dots, \hitfetak_{k-1} < \infty) = P_Q(\hitfetak_1 <\infty, \dots, \hitfetak_{k-2} < \infty)P_Q( H_o < \infty).
    $$
    Combining the two above equations and using induction finishes the proof.
\end{proof}
 Lemma \ref{l:exp} will follow from an exponential tail bound for $\Delta \zeta_j$ on the event that the walker returns to the father of $\zeta_{j-1}$. We will first state the result we need and show how Lemma \ref{l:exp} follows from it. Then we will finish this section showing the lemma.
 \begin{lemma}\label{l:Deltaz}Fix $\kappa \in (0,1]$ and $j\ge 1$. Then there exists $c = c(\kappa)>0$ such that
    $$
    \sup_{Q \in \mathcal{Q}_\kappa}\sup_{(T,x) \in \mathcal{T}_*}P_{T,x;Q}\left(\Delta \zeta_j > m, \hitfetak_{j-1} < \infty \; \middle | \; \mathcal{F}_{\zeta_{j-1}}\right ) \le 2e^{-cm}
    $$
     
\end{lemma}
We can now prove Lemma \ref{l:exp}
\begin{proof}[Proof of Lemma \ref{l:exp}] We will fix $t>0$ and choose it properly latter. Then we have that
\begin{equation}\label{eq:expsum}
    E_Q \left[\exp\Lbrace t\Delta \zeta_{j+1} \mathds{1}\{\hitfetak_{j} < \infty \} \Rbrace \; \middle | \mathcal{F}_{\zeta_{j}}\; \right] \le \sum_{m=1}^\infty P_Q\left( \Delta \zeta_{j+1} \ge \frac{\log(m)}{t}, \hitfetak_{j} < \infty\; \middle | \; \mathcal{F}_{\zeta_{j}}\right).
\end{equation}
    Notice that if $m$ is smaller than $e^t$, then by the Strong Markov Property
    \begin{equation}
        \begin{split}
            P_Q\left( \Delta \zeta_{j+1} \ge \frac{\log(m)}{t}, \hitfetak_{j} < \infty\; \middle | \; \mathcal{F}_{\zeta_{j}}\right) & = P_Q\left(  \hitfetak_{j} < \infty\; \middle | \; \mathcal{F}_{\zeta_{j}}\right)\\
            & = P_Q(H_o < \infty)\mathds{1}\{\zeta_j < \infty\},
        \end{split}
    \end{equation}
    where we have used that $\hitfetak_j$ is infinite if $\zeta_j$ is infinite.
On the other hand, if $m$ is greater than $e^t$, we can use Lemma \ref{l:Deltaz} to obtain
\begin{equation}
     P_Q\left( \Delta \zeta_{j+1} \ge \frac{\log(m)}{t}, \hitfetak_{j} < \infty\; \middle | \; \mathcal{F}_{\zeta_{j}}\right) \le 2e^{-c\log(m)/t}=2m^{-c/t},
\end{equation}
where $c$ depends only on the constant $\kappa$. Choosing $t<c$, using the above two identities on \eqref{eq:expsum}, and using \cite[Lemma 1]{ribeiro2023} which states that 
$$
P_Q(H_o < \infty) \le P_\kappa(H_o < \infty), \forall \in \mathcal{Q}_\kappa,
$$ 
we obtain the following bound
\begin{equation*}
    \begin{split}
        E_Q \left[\exp\Lbrace t\Delta \zeta_{j+1} \mathds{1}\{\hitfetak_{j} < \infty \} \Rbrace \; \middle | \mathcal{F}_{\zeta_{j}}\; \right] & \le e^tP_Q(H_o < \infty) + 2\sum_{m=1}^\infty m^{-c/t}\\
        & \le e^tP_\kappa(H_o < \infty) + 2t/(c-t) \\
        & = e^tP_\kappa(H_o < \infty)\left[1+\frac{2te^{-t}}{(c-t)P_\kappa(H_o < \infty)}\right].
    \end{split}
\end{equation*}
Finally, setting $\epsilon(t)$ to be
$$
\epsilon(t) := \frac{2t}{e^{t}(c-t)P_\kappa(H_o < \infty)},
$$

we conclude the proof.
\end{proof}
We are now left with showing Lemma \ref{l:Deltaz}. As usual, we need to introduce additional notation and a few auxiliary results. 

On the event where $\zeta_k$ is finite, we define $M_k$ to be as follows
\begin{equation}
    M_k = \sup_{\zeta_k \le n \le \hitfetak_k} \{\D{n} - \D{\zeta_k}\}.
\end{equation}
If $\hitfetak_k$ is infinite, then we set $M_k$ to be infinite as well. In words, on the event where~$\hitfetak_k$ is finite, $M_k$ returns how far $X$ went before returning to the father of $\zeta_k$.

In \cite{figueiredo2017building} the authors constructed a coupling of the distance process $\{\D{n}\}_n$ with a right-biased random walk $\{S_k\}_k$ on $\mathbb{Z}$ so that the right-biased random walk is always closer to $0$. More formally, in Lemma 5.12 of \cite{figueiredo2017building}, the authors showed that fixed $q \in (1/2,1)$, there exists a natural number $r$ that might depend on $Q$ and a sequence of stopping times $\{\sigma_k^{(r)}\}_k$ such that 
\begin{enumerate}
    \item  $|\sigma_k^{(r)} - \sigma_{k-1}^{(r)}| \le e^{\sqrt{r}}$, almost surely for all $k$
    \item  $|\D{\sigma^{(r)}_{k}} - \D{\sigma^{(r)}_{k-1}}| \le r$, almost surely for all $k$
    \item $\mathbb{P}\left(\D{\sigma^{(r)}_{k}} \ge rS_k\right) = 1$,
\end{enumerate}
where $\{S_k\}_k$ is a random walk on $\mathbb{Z}$ with probability $q$ of jumping to the right.
In Lemma 5 of \cite{ribeiro2023}, the authors showed that the constant $r$ can be chosen uniformly across all $Q\in \mathcal{Q}_k$, for $\kappa \in (0,1]$. This way, the constant $r$ depends only on the constant $\kappa$ and on $q$ and does not depend on the initial state $(T_0,x_0)$ or on other properties of the probability $Q$. Roughly speaking, this coupling says that all TBRW having a probability at least $\kappa$ of adding at least one vertex at each step are all being pushed away from the root at least as fast as the right-biased random walk $\{S_k\}_k$ is moving away from its initial position.

We will leverage this coupling to first show the lemma below which states that $M_k$ has exponential tails when the walker returns to the father of $X_{\zeta_k}$. Then, we will show Lemma \ref{l:Deltaz}.
\begin{lemma}\label{l:Mk} Fix $\kappa \in (0,1]$. Then there exists $c = c(\kappa)>0$ such that
    $$
    \sup_{Q \in \mathcal{Q}_\kappa}\sup_{(T,x) \in \mathcal{T}}P_{T,x;Q}(M_k > m, \hitfetak_k < \infty) \le e^{-cm}
    $$
\end{lemma}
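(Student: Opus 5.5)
By the Strong Markov Property at $\zeta_k$, I may assume $\zeta_k = 0$. The walker then starts at a leaf $x$ of maximal height in $T$, and I must bound the probability of the following event: it first climbs at least $m$ levels above $\D{0}$, and afterwards still returns to $f(x)$. I split this event at $\sigma := \inf\{n : \D{n} \ge \D{0} + m\}$. On $\{M_k > m, \hitfetak_k < \infty\}$ we have $\sigma < \infty$. At time $\sigma$ the walker is at distance at least $\D{0}+m$ from the root, so reaching $f(x)$ after $\sigma$ forces the distance process to drop by at least $m+1$. Since $f(x)$ is an ancestor-level vertex, hitting it requires the distance to fall to $\D{0}-1$. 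Applying the Strong Markov Property at $\sigma$ then gives
$$
P_{T,x;Q}(M_k > m, \hitfetak_k<\infty) \le \sup_{(T',y)} P_{T',y;Q}\left(\exists n: \D{n} \le \D{0} - m\right),
$$
where the supremum runs over all initial states.

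The key step is an exponential bound, uniform in $Q \in \mathcal{Q}_\kappa$ and in the initial state, on the probability that the distance process ever drops $m$ levels below its starting value. I would prove this with the coupling of \cite{figueiredo2017building}, made uniform by Lemma 5 of \cite{ribeiro2023}. Fix $q\in(1/2,1)$, and let $r=r(\kappa,q)$ and $\sigma^{(r)}_k$ be as described above. A drop of $m$ at some time $n$ between $\sigma^{(r)}_{k-1}$ and $\sigma^{(r)}_k$ forces $\D{\sigma^{(r)}_{k-1}} \le \D{0} - m + e^{\sqrt r}$, because a single step changes the distance by at most $1$ and the gap between consecutive $\sigma^{(r)}$'s is at most $e^{\sqrt r}$. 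By item (3) of the coupling, this gives $rS_{k-1} \le \D{0} - m + e^{\sqrt r}$.

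Here I must be careful about the normalization of the coupling. The relevant form is $\D{\sigma^{(r)}_k} - \D{0} \ge rS_k$ with $S_0=0$, which is how the coupling is built, with increments of the distance process dominated by $r$ times the walk steps. With that form, the event above implies $\inf_k S_k \le -(m - e^{\sqrt r})/r$. For a right-biased simple random walk, the gambler's ruin formula gives
$$
P\left(\inf_k S_k \le -a\right) = \left(\frac{1-q}{q}\right)^{a}.
$$
Hence the probability is at most $\left(\frac{1-q}{q}\right)^{(m-e^{\sqrt r})/r}$, which is at most $C e^{-c m}$ with $c = \log\left(\frac{q}{1-q}\right)/r$ depending only on $\kappa$ once $q$ is fixed, say $q=3/4$. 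The prefactor $C$ is absorbed by shrinking $c$ for $m \ge m_0$. For $m < m_0$ I simply adjust the constant; strictly, the stated bound without a prefactor requires $c$ small enough that $e^{-cm_0}$ dominates, and if necessary I would allow a constant factor, as Lemma \ref{l:Deltaz} does with its $2$.

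The main obstacle is justifying that the coupling applies from an arbitrary random state $(T_\sigma,X_\sigma)$ and measures displacement relative to the starting distance. This is exactly the uniformity over initial states asserted in Lemma 5 of \cite{ribeiro2023} (the constant $r$ does not depend on $(T_0,x_0)$), combined with the Strong Markov Property at $\sigma$. I would state this relative form of the coupling explicitly before running the gambler's-ruin estimate.
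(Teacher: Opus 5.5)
Your proposal is correct and is essentially the paper's argument. You apply the Strong Markov Property at $\zeta_k$ and again at the first time the walker is $m$ levels beyond $\D{0}$, and then use the uniform coupling with the right-biased walk to make a subsequent drop of about $m$ levels exponentially unlikely. You also supply steps the paper leaves implicit: interpolating between the coupling times $\sigma^{(r)}_k$, fixing the relative normalization of the coupling, and carrying out the gambler's-ruin estimate; the constant prefactor you flag is harmless for the later use in Lemma~\ref{l:Deltaz}.
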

\begin{proof}Let $H^*_{m}$ be the first time the walker reaches distance $m$ from its initial position $x$. That is,
\begin{equation}
    H^*_{m} := \inf \{ n > 0 \; :\; \D{n} - \D{0} > m\}.
\end{equation}
    Then, shifting the process by $\zeta_k$ and using Strong Markov Property, we have that 
    \begin{equation}\label{eq:smpmk}
        P_{T,x;Q}(M_k > m, \hitfetak_k <\infty) = E_{T,x;Q}\left[ \mathds{1}\{\zeta_k < \infty\}P_{T_{\zeta_k,}X_{\zeta_k};Q}\left( H^*_m < H_{f(X_0)} < \infty \right)\right],
    \end{equation}
    since the walker must reach distance $m$ from its initial position then visit the father of its initial position after that. Then, shifting the process by $H^*_m$ we have a process starting at distance $m$ from its initial position that manages to visit the father of its initial position in finite time. However, by the coupling with the right-biased random walk (fixing $q$ as, say, $2/3$) there exists a constant $c$ depending on $\kappa$ only such that 
    \begin{equation*}
        P_{T_{\zeta_k,}X_{\zeta_k};Q}\left( H^*_m < H_{f(X_0)} < \infty \right) \le e^{-cm}, \; P_{T,x;Q}\text{-a.s.}
    \end{equation*}
    Then, using the above bound in \eqref{eq:smpmk}, obtain
    $$
    P_{T,x;Q}(M_k > m, \hitfetak_k <\infty) \le e^{-cm}.
    $$
    which is what we desired.
\end{proof}
The reader may already have an idea on how Lemma \ref{l:Deltaz} follows from the above bound and the coupling with the right-biased random walk. The random increment~$\Delta \zeta_j$ measures the time it takes from the walker to take an excursion on the tree below $\zeta_{j-1}$ and climb back to the father of $\zeta_{j-1}$. On the other hand, by Lemma~\ref{l:Mk}, if the walker made that return, it did not go too far from $X_{\zeta_{j-1}}$. Thus, after stepping on the father of $\zeta_{j-1}$, the right-biased random walk will push the walker all the way down again at linear speed, making the walker cover the excursion it constructed before in linear time.

With the above in mind, we are finally ready to show Lemma \ref{l:Deltaz} which was the last result left to conclude the proof of Theorem\ref{t:tautail}.
\begin{proof}[Proof of Lemma \ref{l:Deltaz}] We would like to bound from above the following probability
\begin{equation*}
    \begin{split}
        P_{T,x;Q}\left(\Delta \zeta_j > m, \hitfetak_{j-1} < \infty \; \middle | \; \mathcal{F}_{\zeta_{j-1}}\right ) .
    \end{split}
\end{equation*}
    In order to do that, we will intersect the event $\{\Delta \zeta_j > m, \hitfetak_{j-1} < \infty\}$ with $\{M_{j-1} \le \varepsilon m \}$ and its complement. Notice that by Strong Markov Property and Lemma \ref{l:Mk}, it follows that 
    \begin{equation}\label{eq:expbound1}
         P_{T,x;Q}\left(M_{j-1} > \varepsilon m, \hitfetak_{j-1} < \infty \; \middle | \; \mathcal{F}_{\zeta_{j-1}}\right ) \le e^{-cm},\; P_{T,x;Q}\text{-a.s.},
    \end{equation}
    for some positive constant $c$ depending on $\varepsilon$ and $\kappa$. The next step is to bound
    \begin{equation*}
        P_{T,x;Q}\left(\Delta \zeta_j > m, M_{j-1} \le \varepsilon m, \hitfetak_{j-1} < \infty \; \middle | \; \mathcal{F}_{\zeta_{j-1}}\right ). 
    \end{equation*}
    Notice that by the construction of $\zeta_j$, when the process is shifted by $\zeta_{j-1}$, we have that the above probability equals
    \begin{equation*}
        P_{T_{\zeta_{j-1}},X_{\zeta_{j-1}};Q}(\zeta > m,M\le \varepsilon m, H_{f(X_0)} < \infty),
    \end{equation*}
    where $\zeta$ is the first time the walker reaches maximal distance after visiting the father of its initial position ($f(X_0)$) for the first time and $M$ is how far the walker traveled before visiting $f(X_0)$ for the first time. By shifting the process again by $H_{f(X_0)}$, we have that $\zeta$ measures the time to hit the bottom of $T_{H_{f(X_0)}}$, knowing that $f(X_0)$ is at distance at most $\varepsilon m$ from the bottom of $T_{H_{f(X_0)}}$. Using the coupling with the biased random walk, and make $\varepsilon$ sufficiently small, we know that the biased random walk cannot take too long to cover a distance of at most $\varepsilon m$. This discussion gives us that there exists another positive constant depending on $\varepsilon$ and $\kappa$ only, such that
    \begin{equation}\label{eq:expbound2}
        P_{T,x;Q}\left(\Delta \zeta_j > m, M_{j-1} \le \varepsilon m, \hitfetak_{j-1} < \infty \; \middle | \; \mathcal{F}_{\zeta_{j-1}}\right ) \le e^{-cm}, \; P\text{a.s.}
    \end{equation}
    Combining the above bound with \eqref{eq:expbound1} gives us that 
    \begin{equation*}
    \begin{split}
        P_{T,x;Q}\left(\Delta \zeta_j > m, \hitfetak_{j-1} < \infty \; \middle | \; \mathcal{F}_{\zeta_{j-1}}\right ) \le 2e^{-cm},
    \end{split}
\end{equation*}
for some constant $c$ depending on $\varepsilon$ and $\kappa$ only. This concludes the proof.
\end{proof}



\section{Proof of Theorem \ref{t:ld}: Exponential concentration}\label{s:ld}
Before showing Theorem \ref{t:ld}, we will need two auxiliary lemmas and introduce some additional notation to avoid clutter. Throughout this section, $c, c(\varepsilon)$ denote positive constants depending only on $\varepsilon$ and $\kappa$ whose value may change from line to line. We let $\mutau$ be
\begin{equation}\label{eq:mutau}
    \mutau := E_Q[ \tau_1 \mid H_o = \infty],
\end{equation}
and $\mudtau$ is defined as
\begin{equation}\label{eq:mudtau}
    \mudtau := E_Q[ \D{\tau_1}\mid H_o = \infty]
\end{equation}
It is also helpful to recall the definition of $\tau_m$ given at page \pageref{eq:tauk} Equation \eqref{eq:tauk}.
\begin{lemma}
\label{l:tau_ld_bound}
Fix $\kappa \in (0,1]$. Then, for all $\varepsilon > 0$, there exists $c = c(\varepsilon, \kappa)$ such that for all $m \in \mathbb{N}$ the following bound holds
    \begin{equation}
       \sup_{Q\in \mathcal{Q}_\kappa} P_Q \left(
                \left|
                    \tau_m - m \mutau
                \right|
                    >
                \varepsilon m
            \right)
            \leq
        e^{-c(\varepsilon) m}.
    \end{equation}  
\end{lemma}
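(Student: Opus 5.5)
We treat $\tau_m$ as a sum of independent increments and apply a Chernoff (exponential Markov) bound whose rate is uniform over $\mathcal{Q}_\kappa$. Write
$$
\tau_m = \tau_1 + \sum_{i=2}^m (\tau_i - \tau_{i-1}).
$$

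\textbf{Renewal structure.} From the renewal structure of \cite{ribeiro2023}, the increments $\tau_i-\tau_{i-1}$, $i\ge 2$, are i.i.d. Their common law is that of $\tau_1$ under $P_Q(\cdot\mid H_o=\infty)$, and they are independent of $\tau_1$. This is why $\mutau$ is defined as a conditional expectation. We take this as given from the renewal construction.

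\textbf{Uniform exponential moments.} By Theorem~\ref{t:tautail}, $P_Q(\tau_1\ge n)\le e^{-c_0 n}$ with $c_0=c_0(\kappa)$. By \cite[Lemma 1]{ribeiro2023}, $P_Q(H_o=\infty)\ge P_\kappa(H_o=\infty)=:\beta(\kappa)>0$. Hence
$$
P_Q(\tau_1\ge n\mid H_o=\infty)\le \beta^{-1}e^{-c_0n}.
$$
So for $|t|\le c_0/2$ the moment generating functions $E_Q[e^{t\tau_1}]$ and $E_Q[e^{t\tau_1}\mid H_o=\infty]$ are bounded by a constant $C(\kappa)$, uniformly in $Q\in\mathcal{Q}_\kappa$. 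In particular $\mutau\le C(\kappa)$.

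\textbf{Chernoff bound with uniform constants.} Let $Y=\tau_1$ under the conditional law and $\Lambda_Q(t)=\log E_Q[e^{tY}\mid H_o=\infty]$. The upper tail is
$$
P_Q(\tau_m-m\mutau>\varepsilon m)\le E_Q[e^{t\tau_1}]\,\exp\bigl\{-(m-1)\bigl(t(\mutau+\varepsilon)-\Lambda_Q(t)\bigr)+t\mutau\bigr\}.
$$
Taylor expansion gives $\Lambda_Q(t)\le t\mutau+Ct^2$ for $|t|\le c_0/4$. The constant $C$ is uniform because the second derivative of $\Lambda_Q$ is bounded by the ratio of uniformly bounded exponential moments: $E[e^{tY}]\ge 1$ since $Y\ge1$, so the numerator $E[Y^2e^{tY}]$ is controlled by the uniform MGF bound at a slightly larger $t$. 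Choosing $t=\min(c_0/4,\varepsilon/(2C))$ gives exponent at least $(m-1)\varepsilon t/2$, and the prefactor is at most $C(\kappa)e^{tC(\kappa)}$. The lower tail is handled identically with $-t$, using $\Lambda_Q(-t)\le -t\mutau+Ct^2$. The first increment $\tau_1$, whose law differs, only contributes the bounded prefactor $E_Q[e^{\pm t\tau_1}]$. Finally, the constant prefactor is absorbed by reducing $c$ for large $m$. For small $m$ we use that the probability is at most $1$, so a sufficiently small $c(\varepsilon,\kappa)$ handles all $m$, since only finitely many $m$ are exceptional.

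\textbf{Main obstacle.} The delicate point is uniformity in $Q$. It requires both the uniform lower bound on $P_Q(H_o=\infty)$, so that conditioning does not destroy the tail bound, and a uniform quadratic control of $\Lambda_Q$ near $0$. Both reduce to the uniform exponential tail of Theorem~\ref{t:tautail} together with \cite[Lemma 1]{ribeiro2023}. The i.i.d. structure itself must be quoted from the renewal theory of \cite{ribeiro2023}.
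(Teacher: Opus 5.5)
Your argument follows the paper's proof. You use independence of $\tau_1,\tau_2-\tau_1,\dots$, with the later increments distributed as $\tau_1$ under $P_Q(\cdot\mid H_o=\infty)$ (Theorem~2 of \cite{ribeiro2023}), uniform exponential moments from Theorem~\ref{t:tautail}, and a Chernoff bound. You also make explicit two uniformity inputs the paper only gestures at:
\begin{itemize}
\item $P_Q(H_o=\infty)\ge P_\kappa(H_o=\infty)>0$, which transfers the tail bound to the conditional law;
\item a uniform quadratic bound on $\Lambda_Q$ near $0$.
\end{itemize}
There is one small slip in the second point: $E[e^{sY}]\ge 1$ holds only for $s\ge 0$. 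For the lower tail ($s<0$), bound the denominator of $\Lambda_Q''(s)$ by Jensen instead: $E_Q[e^{sY}\mid H_o=\infty]\ge e^{s\mutau}\ge e^{-|s|C(\kappa)}$.

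Your last step, however, is not valid. Knowing that the probability is at most $1$ never gives a bound $e^{-cm}$ with $c>0$. For each of the finitely many exceptional $m$ you would need $\sup_{Q}P_Q(|\tau_m-m\mutau|>\varepsilon m)\le 1-\delta$, and this can fail. Since $\tau_1$ is integer valued, if $\mathrm{dist}(\mutau,\mathbb{Z})>\varepsilon$ for some $Q\in\mathcal{Q}_\kappa$, then the event $\{|\tau_1-\mutau|>\varepsilon\}$ has $P_Q$-probability one. The stated inequality then fails at $m=1$, so in that case the lemma as literally stated is false and no argument could close this step.

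What your Chernoff computation actually proves is
\[
\sup_{Q\in\mathcal{Q}_\kappa}P_Q(|\tau_m-m\mutau|>\varepsilon m)\le C(\varepsilon,\kappa)e^{-c(\varepsilon,\kappa)m}\quad\text{for all } m,
\]
equivalently the stated bound for $m\ge m_0(\varepsilon,\kappa)$. The paper's one-line appeal to Chernoff bounds has the same gap. You should state and prove this weaker form. It is all that Lemma~\ref{l:N_ld_bound} and Theorem~\ref{t:ld} use, since there $m$ is of order $t$ and the prefactor is absorbed for large $t$.
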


\begin{proof} This lemma is a consequence of the renewal structure for the TBRW constructed in \cite{ribeiro2023} and our Theorem \ref{t:tautail}. By Theorem 2 of \cite{ribeiro2023}, we have that the random variables $\tau_1, \tau_2 - \tau_1, \dots, \tau_{k} - \tau_{k-1}, \dots$ are independent with $\tau_k - \tau_{k-1}$ distributed as $\tau_1$ conditioned on $\{H_o = \infty \}$.

Notice that for each $m$, we can write
$$
\tau_m =  \tau_1 + \sum_{k=2}^m (\tau_{k} - \tau_{k-1}),
$$
which is a sum of independent random variables with finite MGF by our Theorem~\ref{t:tautail}. The exponential decay then follows from Chernoff bounds. The fact that the bound holds uniformly over $Q \in \mathcal{Q}_\kappa$ also follows from Theorem \ref{t:tautail}, which holds uniformly over $Q \in \mathcal{Q}_\kappa$.
\end{proof}

For the second auxiliary lemma, we will need an extra definition. For each $t \in \mathbb{N}$, let $N(t)$ be as follows
\begin{equation}
    \label{eq:Ndef}
    N(t) = \sup\left\{\, k \, : \, \tau_k \leq t \right\}.
\end{equation}
That is, $N(t)$ counts the number of regenerations before time $t$. Our next result guarantees an exponential concentration for $N(t)$.
\begin{lemma}
    \label{l:N_ld_bound} Fix $\kappa \in (0,1]$. Then, for all $\varepsilon >0$, there exists a positive constant $c = c(\varepsilon,\kappa)$ such that
    \begin{equation}
       \sup_{Q\in \mathcal{Q}_\kappa} P_Q \left(
                \left|
                    N(t) - \frac{t}{\mutau}
                \right|
                    >
                \varepsilon t
            \right)
            \leq
        e^{- c t}.
    \end{equation}  
\end{lemma}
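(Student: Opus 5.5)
The plan is the standard inversion argument for renewal counting processes: transfer the concentration of $\tau_m$ in Lemma~\ref{l:tau_ld_bound} to $N(t)$ through the identity $\{N(t) \ge k\} = \{\tau_k \le t\}$, which holds because the $\tau_k$ are strictly increasing.

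First I need uniform bounds on $\mutau$. By Theorem~\ref{t:tautail}, the tail $P_Q(\tau_1\ge n)\le e^{-cn}$ holds uniformly over $\mathcal{Q}_\kappa$. Also $P_Q(H_o=\infty)\ge P_\kappa(H_o=\infty)>0$, by the comparison $P_Q(H_o<\infty)\le P_\kappa(H_o<\infty)$ from \cite[Lemma 1]{ribeiro2023}. Conditioning on $\{H_o=\infty\}$ therefore costs at most a factor $1/P_\kappa(H_o=\infty)$, which gives $\mutau\le C(\kappa)$. Trivially $\mutau\ge 1$. Hence $1/\mutau\in[1/C,1]$ uniformly in $Q$, and any relative tolerance translates into absolute tolerances with constants depending only on $\kappa$.

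Next I bound the upper deviation $\{N(t) > t/\mutau + \varepsilon t\}$. Set $k_+ := \lceil t/\mutau + \varepsilon t\rceil$. This event forces $\tau_{k_+}\le t$. Since $k_+\mutau \ge t+\varepsilon t\mutau \ge t+\varepsilon t$, we get
\[
\tau_{k_+} - k_+\mutau \le -\varepsilon t \le -\varepsilon' k_+,
\]
where $\varepsilon' = \varepsilon/(1/\mutau+\varepsilon+1)\ge \varepsilon/(2+\varepsilon)$, using $k_+\le t(1+\varepsilon)+1$ and $t\ge 1$. Lemma~\ref{l:tau_ld_bound} with $m=k_+$ then bounds the probability by $e^{-c(\varepsilon')k_+}\le e^{-c t/C}$, because $k_+\ge t/C$.

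The lower deviation $\{N(t) < t/\mutau - \varepsilon t\}$ is handled the same way. If $t/\mutau-\varepsilon t\le 0$ the event is empty. Otherwise let $k_- := \lfloor t/\mutau-\varepsilon t\rfloor+1$. The event implies $N(t)<k_-$, that is, $\tau_{k_-}>t$. Since $k_-\mutau\le t-\varepsilon t\mutau+\mutau\le t-\varepsilon t+C$, this gives $\tau_{k_-}-k_-\mutau>\varepsilon t - C$. For $t$ large, $\varepsilon t - C\ge \varepsilon t/2\ge (\varepsilon/2) k_-$, since $k_-\le t+1$. Lemma~\ref{l:tau_ld_bound} again gives a bound $e^{-c k_-}$.

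The step I expect to be the main obstacle is that $k_-$ might be small compared to $t$. This is exactly the regime $t/\mutau-\varepsilon t$ close to $0$, where $\varepsilon\mutau$ is close to $1$. There is a cleaner way to handle it than case-splitting on $k_-$. The event $\tau_{k}>t$ with $k\mutau\le t(1-\varepsilon\mutau)+\mutau$ means the sum of $k$ independent increments exceeds its mean by at least roughly $\varepsilon t\mutau$. I will redo the Chernoff argument of Lemma~\ref{l:tau_ld_bound} directly: bound
\[
P_Q(\tau_k>t)\le e^{-\lambda t}\,E_Q\bigl[e^{\lambda\tau_1}\bigr]\,E_Q\bigl[e^{\lambda\tau_1}\mid H_o=\infty\bigr]^{k-1}.
\]
For small $\lambda$ I use $\log E_Q[e^{\lambda\tau_1}\mid H_o=\infty]\le \lambda\mutau+C\lambda^2$ uniformly in $Q$, which follows from Theorem~\ref{t:tautail}. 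This yields a bound $\exp\{-\lambda\varepsilon t+C\lambda^2 t+O(\lambda)\}$, and a small choice of $\lambda=\lambda(\varepsilon,\kappa)$ makes it $e^{-ct}$ regardless of the size of $k$.

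Finally, I choose constants to absorb small $t$, and the union of the two deviations gives the claimed bound $e^{-ct}$ uniformly over $\mathcal{Q}_\kappa$.
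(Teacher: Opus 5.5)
Your proposal is correct and is essentially the paper's argument: invert through $\{N(t)\ge k\}=\{\tau_k\le t\}$, use the uniform bounds $1\le\mu_\tau\le C(\kappa)$, and apply Lemma~\ref{l:tau_ld_bound}. The obstacle you flag for the lower deviation goes away by taking $\varepsilon$ small, as the paper does, and this is legitimate because the event $\{|N(t)-t/\mu_\tau|>\varepsilon t\}$ shrinks as $\varepsilon$ grows; for $\varepsilon\le 1/(2C(\kappa))$ one has $k_->t/(2C)$ uniformly in $Q$, so Lemma~\ref{l:tau_ld_bound} applies directly and your extra Chernoff computation, while valid, is not needed.
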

\begin{proof}
    We start by defining
    \begin{equation}
        \label{eq:mplusdef}
        m_+ = m_+(t, \varepsilon)
            = 
            \left(
                \frac{1}{\mutau} + 
                \varepsilon
            \right)
            t
    \end{equation}
    and therefore,
    \begin{equation}
        \label{eq:Ntldimplication}
        N(t) 
            >
        \left(
            \frac{1}{\mutau} + 
            \varepsilon
        \right)
        t
            \implies
                \tau_{\lceil m_+ \rceil} \leq t 
               = \mutau \lceil m_+ \rceil\left(1-\varepsilon\mutau + O(\varepsilon^2\mutau^2) \right),
    \end{equation}
    which is possible since, by Theorem \ref{t:tautail} and the definition of $\tau_1$, for all $Q \in \mathcal{Q}_\kappa$, $\mutau$ is bounded from above and from below by positive constants depending on $\kappa$ only. Finally, using Lemma \ref{l:tau_ld_bound}, we can make $\varepsilon$ small enough so that
    \begin{equation}
        \label{eq:Ntldbound2}
        P_Q \left(
            N(t) 
                >
            \left(
                \frac{1}{\mutau} + 
                \varepsilon
            \right)
            t
        \right)
        \leq
            \exp
            \left\{
                -c(\varepsilon) m_+
            \right\}
        =
            \exp
            \left\{
                -c(\varepsilon)  
                \left(
                    \frac{1}{\mutau} + 
                    \varepsilon
                \right)
                t
            \right\},
    \end{equation}
    since the events $\{N(t) > m_+\}$ and $\{\tau_{\lceil m_+ \rceil} \leq t\}$ are the same. The proof for the upper bound on $N(t)$ follows in the same manner.
\end{proof}

We can finally show Theorem \ref{t:ld} using the above two lemmas and Theorem \ref{t:tautail}.

\begin{proof}[Proof of Theorem \ref{t:ld}]

Note that $P_Q(N(t) = 0) = P_Q(\tau_1 > t) \leq e^{-ct}$ by Theorem~\ref{t:tautail}, so it suffices to work on the event $\{N(t) \geq 1\}$. In this event, $\D{t}$ can be written as
\begin{align}
    \D{t} \label{eq:ldproof1}
    & =
        \D{\tau_1}
        +
        \sum_{k = 1}^{N(t)-1}
            \left(
                \D{\tau_{k+1}}
                -
                \D{\tau_{k}}
            \right)
    \\ & \quad + \nonumber        
        \D{t} - \D{\tau_{N(t)}}.
\end{align}
We prove the bound for the probability of $\D{t}/t$ exceeding $v(Q)$ by $\varepsilon$, since the matching bound is proved in a similar way: on the event $\{\D{t} < (v(Q) - \varepsilon)t\}$, drop the nonnegative first and last summands on the right-hand side of \eqref{eq:ldproof1} to reduce to bounding the sum of i.i.d.\ increments from above, and then apply the same Chernoff argument. By the union bound, we can write
\begin{align}
    & P_Q 
        \left(
            \D{t} > 
            (\varepsilon + v(Q)) t
        \right)
    \nonumber
    \\
    &  \quad\quad\quad \label{eq:ldproof2}
    \leq    
    P_Q 
        \left(
            \D{\tau_1} > 
            \frac{\varepsilon}{3} t
        \right)
    \\
    & \quad\quad\quad \nonumber
    + 
    P_Q 
    \left(
        \sum_{k = 1}^{N(t)-1}
            \left(
                \D{\tau_{k+1}}
                -
                \D{\tau_{k}}
            \right) > 
        \left(
            \frac{\varepsilon}{3} 
            + 
            v(Q)
        \right)
        t
    \right)
    \\
    & \quad\quad\quad \nonumber
    +   
    P_Q 
        \left(
            \D{t} - \D{\tau_{N(t)}} > 
            \frac{\varepsilon}{3} t
        \right)
\end{align}
Since $\D{t} - \D{s} \leq t - s$ (the walker moves at most one step per unit time), we have
\[
    \D{\tau_1} 
        \leq \tau_1 + 1, 
    \quad 
    \D{t} - \D{\tau_{N(t)}} 
        \leq \tau_{N(t)+1} - \tau_{N(t)}.
\]
Recall that $\tau_{m+1} - \tau_{m}$ is distributed as $\tau_1$ conditioned on $\{H_o = \infty \}$. By \cite[Lemma~1]{ribeiro2023}, $P_Q(H_o < \infty) \le P_\kappa(H_o < \infty) < 1$ for all $Q \in \mathcal{Q}_\kappa$, so $\inf_{Q \in \mathcal{Q}_\kappa}P_Q(H_o = \infty) \ge P_\kappa(H_o = \infty) > 0$. Therefore the conditional tail satisfies $P_Q(\tau_1 \ge n \mid H_o = \infty) \le P_\kappa(H_o = \infty)^{-1}e^{-cn}$, and Theorem~\ref{t:tautail} gives exponential bounds for both the unconditional and conditional distributions. Thus the first and third summands in the right-hand side of \eqref{eq:ldproof2} are bounded from above by $\exp\{-c(\varepsilon)t\}$. Therefore, we now focus on bounding the second summand.
    
By Lemma \ref{l:N_ld_bound}, we have
\begin{equation}
    P_Q \left(
            \frac{N(t)}{t} - \frac{1}{\mutau}
            >
        \frac{\varepsilon}{6\mutau^2}
    \right)
        \leq
            \exp\{ -c(\varepsilon) t\}
\end{equation}
Therefore, again by the union bound, the result will follow if we provide a suitable upper bound for
\begin{equation}
    P_Q \left(
        \sum_{k = 1}^{\lceil(t/\mutau)(1+\varepsilon/6\mutau)\rceil}
           \left(
                \D{\tau_{k+1}} 
                -
                \D{\tau_{k}}
           \right)
            >
        \left( 
            \frac{\varepsilon}{3} + v(Q) 
        \right) t
    \right).
\end{equation}  
By \cite[Theorem 3]{ribeiro2023}, we have $v(Q) = \mudtau / \mutau$ and the above becomes
\begin{equation}
    P_Q \left(
        \sum_{k = 1}^{\lceil(t/\mutau)(1+\varepsilon/6\mutau)\rceil}
           \left(
                \D{\tau_{k+1}} 
                -
                \D{\tau_{k}}
           \right)
            >
        \frac{t}{\mutau} 
            \left( 
                \frac{\varepsilon\mutau}{3} + \mudtau 
            \right)
    \right).
\end{equation}  
Writing $m_t = \lceil(t/\mutau)(1+\varepsilon/6\mutau)\rceil$, we note that $\mudtau \le \mutau$ (since $\D{\tau_1} \le \tau_1 + 1$ and $\mutau \ge 1$) and that $\mutau$ is bounded above uniformly over $Q \in \mathcal{Q}_\kappa$ (by the uniform exponential tail of $\tau_1$). Therefore, for sufficiently small $\varepsilon$, the above probability is bounded from above by
\begin{equation}
    P_Q \left(
        \sum_{k = 1}^{m_t}
           \left(
                \D{\tau_{k+1}} 
                -
                \D{\tau_{k}}
           \right)
            >
        m_t 
            \left( 
                \frac{\varepsilon\mutau}{10} + \mudtau 
            \right)
    \right).
\end{equation}  
But now  $\D{\tau_{k+1}}-\D{\tau_{k}}$ are i.i.d. random variables with mean $\mudtau$ and a moment generating function, since they are stochastically dominated by $\tau_{k+1} - \tau_k$. Since $m_t$ is linear in $t$, a Chernoff bound finishes the proof of the result.
\end{proof}

\section{Proof of Theorem \ref{t:v}: $v(p)$ is analytic}\label{s:analyticity}
For each $p \in (0,1]$, we know that the walker has a well-defined speed $v(p)$, which is given by the limit of $\D{n}/n$ as $n$ goes to infinity. In this section, we will show that the function $p \mapsto v(p)$ is analytic on $(0,1]$. The main tool for showing this result is complex analysis.

We will follow the technique given by \cite{grego} in the context of bond percolation in~$\mathbb{Z}^d$. In a nutshell, the argument consists of seeing probabilities as functions of the parameter $p$, and then extending them to the complex numbers where the Weierstrass Theorem (Theorem \ref{t:Wei}) and his $M$-test (Theorem \ref{t:Mtest}) can be applied. The goal is then to properly bound the modulus of the complex extensions so the $M$-test can be used.

With the above discussion in mind, we will make the following abuse of notation: given an event $A$, we denote by $P_z(A)$ the analytic complex extension of the real function $p \mapsto P_p(A)$. Although we know that $P_z(A)$ is meaningless in terms of probability, this notation has the advantage of being more visually appealing and making some expressions nicer. Also, it saves us the burden of introducing a new notation for every new extension that appears in our arguments. For instance, if $P_p(A_n) = \sum q_n (1-p)^{n-i}p^i$, then $P_z(A_n) = \sum q_n(1-z)^{n-i}z^i$ for $z \in \mathbb{C}$. We will denote the complex ball of radius $r$ around $z$ as $B_{\mathbb{C}}(z,r)$.

For the proof of Theorem \ref{t:v}, we will need the exponential tail bounds on $\tau_1$ given by Theorem \ref{t:tautail} and the following technical lemmas
\begin{lemma}\label{l:An} Fix $p \in (0,1], n \in \mathbb{N}$ and $0<r<p$. Then, for any $z \in B_{\mathbb{C}}(p,r)$ the following bound holds
$$
|P_{z}(A_n)| \le \exp\{(2r/(p-r))n\}P_{p-r}(A_n)
$$
for all $A_n  \in \mathcal{F}_n$. 
\end{lemma}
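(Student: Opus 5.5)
We prove Lemma \ref{l:An} by writing $P_p(A_n)$ as a finite polynomial in $p$ indexed by the coin outcomes, and bounding the complex extension term by term.

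\textbf{Step 1: polynomial representation.} In the Bernoulli case the only randomness in the first $n$ steps is:
\begin{itemize}
\item the $n$ coin flips $\xi_1,\dots,\xi_n\in\{0,1\}$, where $\xi_i=1$ means a leaf is added at step $i$;
\item the uniform choices of neighbours.
\end{itemize}
Every $A_n\in\mathcal{F}_n$ is a union of cylinder events determined by $(T_1,X_1,\dots,T_n,X_n)$. Since the trajectory determines the coin outcomes, such a cylinder $\gamma$ has a well-defined vector $\xi(\gamma)\in\{0,1\}^n$. Its probability factorises as
\[
P_p(\gamma)=w(\gamma)\,p^{k(\gamma)}(1-p)^{n-k(\gamma)},
\]
where $k(\gamma)=\sum_i\xi_i(\gamma)$. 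The weight $w(\gamma)\ge 0$ is the product of the neighbour-choice probabilities $1/\deg$. These degrees are determined by the path, so $w(\gamma)$ does not depend on $p$. Hence
\[
P_p(A_n)=\sum_{\gamma\in A_n} w(\gamma)\,p^{k(\gamma)}(1-p)^{n-k(\gamma)},
\]
a polynomial in $p$. Its complex extension $P_z(A_n)$ is obtained by replacing $p$ with $z$. By the triangle inequality,
\[
|P_z(A_n)|\le \sum_{\gamma} w(\gamma)\,|z|^{k}|1-z|^{n-k}.
\]

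\textbf{Step 2: pointwise ratio bound.} Compare each term with $P_{p-r}(\gamma)=w(\gamma)(p-r)^k(1-p+r)^{n-k}$. It suffices to show, for $|z-p|<r$,
\[
\frac{|z|}{p-r}\le e^{2r/(p-r)},\qquad \frac{|1-z|}{1-p+r}\le e^{2r/(p-r)}.
\]
Then each ratio of terms is at most $e^{2rn/(p-r)}$. Summing over $\gamma$ and using nonnegativity of the weights gives the claim.

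For the first ratio, $|z|\le p+r$, so
\[
\frac{|z|}{p-r}\le 1+\frac{2r}{p-r}\le e^{2r/(p-r)}.
\]
For the second ratio, $|1-z|\le 1-p+r$, so the ratio is at most $1$. Both bounds hold in the regime $p-r>0$; when $p=1$, $1-p+r=r>0$.

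\textbf{Main obstacle.} The delicate step is Step 1: justifying that $w(\gamma)$ is independent of $p$ and that distinct cylinders give disjoint events, so that the triangle inequality applies to a sum of nonnegative weights. The point to argue carefully is that the step-$i$ leaf addition is read off from $T_{i}$ versus $T_{i-1}$, so the coin sequence is a function of the trajectory. Conditional on the trajectory up to step $i-1$ and $\xi_i$, the law of $X_i$ is uniform over the neighbours and does not involve $p$. The chain rule for the Markov process then yields the factorisation.
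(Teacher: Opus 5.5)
Your proposal is correct and follows essentially the same route as the paper. You expand $P_p(A_n)$ over trajectories as a finite polynomial $\sum q_n\,p^{i}(1-p)^{n-i}$, use $|z|\le p+r$ and $|1-z|\le 1-p+r$ on $B_{\mathbb{C}}(p,r)$, and compare termwise with $P_{p-r}$ at the cost of a factor $\bigl((p+r)/(p-r)\bigr)^{i}\le (1+2r/(p-r))^{n}\le e^{2rn/(p-r)}$; your extra care that the weights are $p$-independent and that the coin sequence is read off the trajectory is exactly what the paper leaves implicit.
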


\begin{lemma}\label{l:Ho} 
There exists an analytic extension of the function $p \mapsto P_p(H_o = \infty)$ to an open neighborhood of $\mathbb{C}$ containing the interval $(0, 1)$. We denote such function by $P_z(H_o = \infty)$.
\end{lemma}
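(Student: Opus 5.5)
We would obtain the extension as a uniform limit of polynomials. Write $P_p(H_o=\infty)=1-P_p(H_o<\infty)=1-\sum_{n\ge1}P_p(H_o=n)$. Each event $\{H_o=n\}$ lies in $\mathcal{F}_n$, and under $Q={\rm Ber}(p)$ the probability $P_p(H_o=n)$ is a finite sum over trajectories of length $n$. Each trajectory contributes a product of factors $p^i(1-p)^{n-i}$, coming from the coin flips deciding whether a leaf is added, times the transition probabilities $1/\deg$. The degrees are determined by the trajectory and do not depend on $p$. Hence $p\mapsto P_p(H_o=n)$ is a polynomial, and its extension $P_z(H_o=n)$ is entire. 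The plan is to show that $\sum_n P_z(H_o=n)$ converges uniformly on $B_{\mathbb{C}}(p,r)$ for some small $r=r(p)>0$. The Weierstrass $M$-test (Theorem \ref{t:Mtest}) together with the Weierstrass Theorem (Theorem \ref{t:Wei}) then gives an analytic function on each such ball. These functions agree on overlaps of balls, since they agree with the real function on real segments and by the identity theorem. Their union is the desired open neighbourhood of $(0,1)$.

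The key bound combines Lemma \ref{l:An} with an exponential tail for finite return times. Lemma \ref{l:An} gives
$$|P_z(H_o=n)|\le \exp\{(2r/(p-r))n\}\,P_{p-r}(H_o=n).$$
So it suffices to show that $P_{p'}(n\le H_o<\infty)\le Ce^{-cn}$ for $p'=p-r$, with $c$ uniform over $p'$ in a compact subset of $(0,1]$. This is uniform ellipticity with $\kappa=p-r$. One then chooses $r$ small enough that $2r/(p-r)<c$, which makes $\sum_n e^{(2r/(p-r)-c)n}$ a summable majorant.

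To prove the tail bound on $\{n\le H_o<\infty\}$, split according to how far the walker travels before returning. In the first case the walker reaches distance $\varepsilon n$ from its start and later returns to $o$. The coupling with the biased walk, exactly as in Lemma \ref{l:Mk}, bounds this case by $e^{-c\varepsilon n}$. In the second case the walker stays within distance $\varepsilon n$ of the root for $n$ steps. Under the coupling, the distance process dominates $rS_k$ along stopping times whose gaps are bounded by $e^{\sqrt r}$. A biased walk that remains in an interval of length $\varepsilon n/r$ for order $n/e^{\sqrt r}$ steps is a large-deviation event of probability $e^{-cn}$, provided $\varepsilon$ is small. Alternatively, one could hope to read this bound off Theorem \ref{t:tautail}, since $\{H_o<\infty\}$ forces $\tau_1>H_o$. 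But $\tau_1$ starts from $X_0$ rather than from $o$, so the direct coupling argument is cleaner.

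The main obstacle is this exponential estimate for $P_{p-r}(n\le H_o<\infty)$ with a rate $c$ that stays bounded below as $r$ varies. The constants in the coupling must depend only on $\kappa$, and this uniformity is guaranteed by Lemma 5 of \cite{ribeiro2023}. Once that estimate is in place, the $M$-test step is routine. Near $p=1$ the ball must be taken inside the region where Lemma \ref{l:An} applies, which is the reason the statement only claims a neighbourhood of the open interval $(0,1)$.
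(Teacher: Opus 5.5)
Your proof is correct and has the same skeleton as the paper's. You write $P_p(H_o<\infty)=\sum_n P_p(H_o=n)$ with polynomial terms, apply Lemma~\ref{l:An} to get $|P_z(H_o=n)|\le e^{2rn/(p-r)}P_{p-r}(H_o=n)$, and finish with the $M$-test. The one genuine difference is where the exponential decay of $P_{p-r}(H_o=n)$ comes from.

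The paper takes exactly the shortcut you set aside. On $\{H_o=n\}$, every $0<m<n$ has $\D{m}\ge 1>0=\D{n}$, which violates the renewal condition at $t=n$. Also $\D{n}=0$ rules out $m=n$. Hence $\{H_o=n\}\subseteq\{\tau_1>n\}$, and Theorem~\ref{t:tautail} gives $P_{p-r}(H_o=n)\le e^{-c'n}$ at once. Your reason for avoiding this (that $\tau_1$ ``starts from $X_0$ rather than from $o$'') is not an obstruction. Both $H_o$ and $\tau_1$ are functionals of the same walk started at the non-root tip of an edge, so no shift is involved.

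Your alternative proves $P_{p'}(n\le H_o<\infty)\le Ce^{-cn}$ directly by splitting into two cases:
\begin{enumerate}
\item the walker reaches distance $\varepsilon n$ and then returns to $o$, handled by the estimate inside the proof of Lemma~\ref{l:Mk};
\item the walker stays within distance $\varepsilon n$ up to time $n$, handled by a single-time large-deviation bound for the biased walk, using the bounded gaps of the $\sigma^{(r)}_k$.
\end{enumerate}
This is sound at the same level of rigor as Section~\ref{s:prooftautail}.

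What each route buys: the paper's is a two-line reduction, but it makes the lemma depend on the whole renewal decomposition behind Theorem~\ref{t:tautail}. Yours is self-contained, needing only the biased-walk coupling and its uniformity over $\mathcal{Q}_\kappa$ from \cite{ribeiro2023}. To keep the rate fixed as $r\to 0$, work in $\mathcal{Q}_{p/2}$ for all $r\le p/2$ rather than with the moving $\kappa=p-r$.

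Two side remarks in your proposal are inaccurate but harmless. First, agreement on overlapping balls needs no identity theorem, since every ball uses the same series $1-\sum_n P_z(H_o=n)$. Second, Lemma~\ref{l:An} does apply at $p=1$ (there $|1-z|<r=1-(1-r)$), so nothing forces the restriction to $(0,1)$. Your argument in fact gives a neighbourhood of $(0,1]$, which is what the proof of Theorem~\ref{t:v} uses at $p=1$.
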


\begin{lemma}\label{l:tau_analytics} Let $B_n \in \mathcal{F}_n$, then there exists $A_n \in \mathcal{F}_n$, such that
\begin{equation}\label{eq:fatorataun}
P_p(B_n, \tau_1 = n, H_o = \infty) = P_p(A_n)P_p(H_o = \infty)
\end{equation}
and that
\begin{equation}\label{eq:PAn_taun}
P_p(A_n) \le P_p(\tau_1 \ge n).
\end{equation}
\end{lemma}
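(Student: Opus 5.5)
Starting from the definition \eqref{def:tau1}, I will split the event $\{\tau_1 = n\}$ into a part that depends only on the path up to time $n$ and a part concerning the future after time $n$. The future part will be handled by the Markov property at the deterministic time $n$. On $\{\tau_1 = n\}$ we have $\Deg{n}=1$, $\D{s}<\D{n}$ for all $s<n$, and $\D{t}\ge \D{n}$ for all $t>n$. Since $X_n$ is a leaf and the tree below it is grown only by the walker, the future condition says exactly that after time $n$ the walker never visits $f(X_n)$. So I set
$$
A_n := B_n\cap\{\Deg{n}=1,\ \D{s}<\D{n}\ \forall s<n\}\cap C_n,
$$
where $C_n\in\mathcal{F}_n$ encodes that $n$ is the \emph{first} time such a regeneration could occur. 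Concretely, for every earlier candidate time $m<n$ (a leaf at a strict new maximum), $C_n$ requires that the walker returned to $f(X_m)$ at some time in $(m,n]$. All of these conditions involve only times $\le n$, so $A_n\in\mathcal{F}_n$.

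Next I will check the event identity
$$
\{B_n,\ \tau_1=n,\ H_o=\infty\}=A_n\cap\{H_{f(X_n)}\circ\theta_n=\infty\}\cap\{H_o=\infty\}.
$$
The subtle point is that an earlier candidate $m$ might fail only because of a return to $f(X_m)$ after time $n$. Such a return cannot happen: to reach $f(X_m)$ after time $n$, the walker must pass through the ancestors of $X_n$, and in particular through $f(X_n)$, which is forbidden. Here I use that $\D{m}<\D{n}$, so $f(X_m)$ lies strictly above $X_n$ along the path, or off it, since every vertex in $T_n$ at distance less than $\D{n}$ is separated from $X_n$ by $f(X_n)$. For the same reason, avoiding $f(X_n)$ forever implies $H_o=\infty$ after time $n$, and before time $n$ the condition $\D{s}<\D{n}$ together with a start at distance $1$ forces $\D{s}\ge1$ for $s\ge 1$ only if I also include $\{H_o>n\}$ in $A_n$. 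I will add that condition to $A_n$.

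Then the Markov property at time $n$ gives
$$
P_p(B_n,\tau_1=n,H_o=\infty)=E_p\bigl[\mathds{1}_{A_n}P_{T_n,X_n;p}(H_{f(X_0)}=\infty)\bigr].
$$
The key observation is that, from a leaf $X_n$ whose father is reached only through that leaf, the future depends only on the subtree hanging from $X_n$, which is a single vertex. Escape to infinity without visiting $f(X_n)$ therefore has the same law as the escape event $\{H_o=\infty\}$ started from the standard initial edge. This gives \eqref{eq:fatorataun}. This isomorphism step, showing that the process restricted to the subtree of $X_n$ and stopped at $f(X_n)$ has the law of $P_p$ started from an edge, is the main thing I need to verify carefully. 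The same argument appears already in \eqref{eq:smpho}, so I will invoke it.

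Finally, for \eqref{eq:PAn_taun} I will show that $A_n\subseteq\{\tau_1\ge n\}$. On $A_n$ no candidate $m<n$ satisfies the future condition, because each one has a return to $f(X_m)$ at a time $\le n$. Hence no $m<n$ can be $\tau_1$, which gives $\{\tau_1\ge n\}\supseteq A_n$ and therefore $P_p(A_n)\le P_p(\tau_1\ge n)$.
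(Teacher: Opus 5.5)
Your proof is correct and is essentially the paper's argument. It uses the same factorization via the simple Markov property at the deterministic time $n$, the same identification of escape from the leaf $X_n$ with $P_p(H_o=\infty)$, the same tree-structure observation that returns to earlier fathers must happen before time $n$, and the same inclusion $A_n\subseteq\{\tau_1\ge n\}$.

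The only difference is bookkeeping. You define $A_n$ directly from \eqref{def:tau1}, requiring every earlier candidate $0<m<n$ to be killed by a visit to $f(X_m)$ before time $n$. The paper instead indexes candidates through $\zeta_j,\hitfetak_j$ and takes the disjoint union $A_n=\bigcup_j A_{n,j}$. Your version has the mild advantage of not relying on the identity $\tau_1=\zeta_K$, which needs care on $\{H_o=\infty\}$, since there $\hitfetak_0=H_o=\infty$.
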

We will postpone the proof of the above lemmas to the end of this section. For now, we will focus on how Theorem \ref{t:v} follows from them.

\begin{proof}[Proof of Theorem \ref{t:v}: $v(p)$ is analytic] Leveraging the renewal structure, in Theorem 3 of \cite{ribeiro2023}, the authors showed the following expression for $v(p)$
\begin{equation}\label{eq:formulavp}
    v(p) = \frac{E_p[\D{\tau_1} \; ; \; H_o = \infty ]}{E_p[{\tau_1} \; ; \; H_o = \infty ]}.
\end{equation}
We will show that both numerator and denominator are analytic functions.

\noindent \underline{$ E_p[{\tau_1} \; ; \; H_o = \infty ]$ is analytic.}  We will use Theorems \ref{t:Wei} and \ref{t:Mtest} to show that the function defined below is analytic
\begin{equation}\label{eq:Eztau}
    \begin{split}
    E_z[{\tau_1} \; ; \; H_o = \infty ] & := \sum_{n=1}^\infty nP_z(\tau_1 = n,H_o = \infty).
    \end{split}
\end{equation}
By Lemma \ref{l:tau_analytics}, there exists an $\mathcal{F}_n$-measurable event $A_n$ such that
\[
    P_p(\tau_1 = n, H_o = \infty) := P_p(A_n)P_p(H_o = \infty).
\]
We can then extend $P_p(\tau_1 = n, H_o = \infty)$ to a complex neighborhood of $(0,1]$ as the product of two functions
\begin{equation}\label{eq:pztau1n}
    P_z(\tau_1 = n, H_o = \infty) := P_z(A_n)P_z(H_o = \infty).
\end{equation}
Note that the product above is an analytic function by Lemma \ref{l:Ho} and the fact that $P_p(A_n)$ is a polynomial function of $p$ (see the proof of Lemma~\ref{l:An} below). Now, fix $p \in (0,1]$ and $r>0$ with $0 < r < p$. Then, by Lemma \ref{l:An}, it follows that for all $z \in B_{\mathbb{C}}(p,r)$
$$
    |P_z(A_n)| \le \exp\Lbrace\frac{2rn}{p-r}\Rbrace P_{p-r}(A_n),
$$
which combined with \eqref{eq:PAn_taun} of Lemma \ref{l:tau_analytics} and Theorem \ref{t:tautail} yields
$$
    |P_z(A_n)| \le \exp\Lbrace\frac{2rn}{p-r}\Rbrace P_{p-r}(\tau_1 \ge n) \le \exp\Lbrace\frac{2rn}{p-r}\Rbrace e^{-cn},
$$
for a constant $c$ which can be chosen uniformly across all $p \in (p_0,1]$, for $p_0 > 0$. This implies that by making $r$ smaller if needed, we obtain the following upper bound for all $z \in B_{\mathbb{C}}(p,r)$
\begin{equation}
    |P_z(A_n)| \le e^{-cn/2}. 
\end{equation}
Also, by choosing a smaller $r>0$ if needed, we guarantee that $z \mapsto P_z(H_o = \infty)$ is analytic in the compact ball $\bar{B}_{\mathbb{C}}(p,r)$. Therefore, there exists $C > 0$ such that
\begin{equation}
    \left|P_z(\tau_1 = n, H_o = \infty)\right| \leq C e^{-cn/2}.
\end{equation}
By Weierstrass Theorems \ref{t:Mtest} and then \ref{t:Wei}, we have that $E_z[\tau_1 ; H_o = \infty]$ is indeed well defined and analytic on $B_{\mathbb{C}}(p,r)$ (the $M$-test gives uniform convergence on the compact ball $\bar{B}_{\mathbb{C}}(p,r)$, and the Weierstrass theorem then yields analyticity on the open ball). Since $p$ was arbitrary and it extends $E_p[{\tau_1} \; ; \; H_o = \infty ]$, we obtain the desired result. \\

\noindent \underline{$ E_p[\D{\tau_1} \; ; \; H_o = \infty ]$ is analytic.} The strategy for the numerator of $v(p)$ is the same as for the denominator, but the bookkeeping differs. We write
$$
E_p[\D{\tau_1} \; ; \; H_o = \infty ] = \sum_{k=1}^\infty k\, P_p(\D{\tau_1} = k,\, H_o = \infty).
$$
Expanding $P_p(\D{\tau_1} = k,\, H_o = \infty) = \sum_{j=k}^\infty P_p(\D{j} = k, \tau_1 = j, H_o = \infty)$ (note that $\D{j} = k$ requires $j \ge k$), we obtain
$$
E_p[\D{\tau_1} \; ; \; H_o = \infty ] = \sum_{k=1}^\infty k\sum_{j=k}^\infty P_p(\D{j} = k, \tau_1 = j, H_o = \infty).
$$
Since $\{\D{j} = k\} \in \mathcal{F}_j$, by Lemma \ref{l:tau_analytics} (applied with $B_j = \{\D{j} = k\}$), there exists an event $A_{j,k} \in \mathcal{F}_j$ such that 
\begin{equation*}
P_p(\D{j} = k, \tau_1 = j, H_o = \infty) = P_p(A_{j,k})P_p(H_o = \infty).    
\end{equation*}
Then, exactly as in \eqref{eq:pztau1n}, we can extend $P_p(\D{j} = k, \tau_1 = j, H_o = \infty) $ as follows
\begin{equation}\label{eq:pzdjk}
    P_z(\D{j} = k, \tau_1 = j, H_o = \infty) = P_z(A_{j,k})P_z(H_o = \infty),
\end{equation}
where $P_z(A_{j,k})$ is a polynomial function on $z$, as before. Proceeding as in the denominator case, by Lemma~\ref{l:An} and \eqref{eq:PAn_taun}, we deduce that for $r$ sufficiently small and all $z \in B_{\mathbb{C}}(p,r)$
\begin{equation}
    |P_z(\D{j} = k, \tau_1 = j, H_o = \infty)| \le C e^{-c'j},
\end{equation}
for positive constants $C, c'$. To apply the Weierstrass $M$-test we need
$$
\sum_{k=1}^\infty k \sum_{j=k}^\infty C e^{-c'j} = C\sum_{k=1}^\infty k \cdot \frac{e^{-c'k}}{1 - e^{-c'}} < \infty,
$$
which holds since $\sum_{k=1}^\infty k e^{-c'k} < \infty$. This is enough to conclude the proof by applying Theorem \ref{t:Mtest} and then \ref{t:Wei}.

\end{proof}
\subsection{Proof of Lemma \ref{l:An}}To conclude the proof the Theorem \ref{t:v}, we show Lemmas \ref{l:An}, \ref{l:Ho} and \ref{l:tau_analytics}. But before, let us introduce some additional notation that will be needed throughout the coming proofs. We denote a realization of the first $n$ steps of the TBRW as a sequence of tree-walker tuples
\begin{equation}
((T_1,x_1), \dots, (T_n,x_n)) \equiv (\vec{T}, \vec{x})_n
\end{equation}
For each $n, i \in \mathbb{N}$ and $A_n \in \mathcal{F}_n$, let $S_{i}(A_n)$ denote the realizations $(\vec{T}, \vec{x})_n \in A_n$ such that $|T_n| = i$. 
We then let $P_{(\vec{T}, \vec{x})_n}(p)$ the probability of observing exactly $(\vec{T}, \vec{x})_n$ with the coin parameter $p$. Now we are ready for the first proof.
\begin{proof}[Proof Lemma \ref{l:An}]
We begin observing that for each $n$, we have
\begin{equation}\label{eq:hnsplit}
    P_p(A_n) = \sum_{i=0}^n\sum_{(\vec{T}, \vec{x})_n  \in S_i(A_n)}P_{(\vec{T}, \vec{x})_n}(p ).
\end{equation}
    Notice that for any pair $(\vec{T}, \vec{x})_n \in S_{i}(A_n)$, it follows that 
    \begin{equation}
       P_{(\vec{T}, \vec{x})_n}(p ) = q_n(1-p)^{n-i}p^i,
    \end{equation}
where $q_n = q_n((\vec{T}, \vec{x})_n)$ is the product of jump probabilities of the $n$ steps of the walker. Whereas $(1-p)^{n-i}p^i$ accounts for the probability of adding exactly $i$ new vertices in $n$ steps.

Now, fixing $p$ and choosing $r$ such that $0<r<p$, we have that for $z \in B_{\mathbb{C}}(p,r)$,
\begin{equation}
    |z| \le p+r \text{ and }|1-z| \le 1-p+r.
\end{equation}
Indeed, the first inequality follows from the triangular inequality, and the second, from the fact that the the distance to $1$ in $B_{\mathbb{C}}(p,r)$ is maximized at $z = p - r$. Thus, using the above inequalities, for any sequence $(\vec{T}, \vec{x})_n \in S_{i}(A_n)$, it follows that
\begin{equation}
    \begin{split}
        \left|P_{(\vec{T}, \vec{x})_n}(z) \right|
        =
            q_n|1-z|^{n-i}|z|^i
        \leq 
            q_n(1-p+r)^{n-i}(p+r)^i.
    \end{split}
\end{equation}
Dividing and multiplying by $(p-r)^{i}$, the expression for $P_{(\vec{T}, \vec{x})_n}(p-r)$ appears, yielding
\begin{equation}
    |P_{(\vec{T}, \vec{x})_n}(z)| \le \frac{(p+r)^{i}}{(p-r)^{i}}P_{(\vec{T}, \vec{x})_n}(p-r).
\end{equation}
Combining the above inequality to \eqref{eq:hnsplit} together with triangle inequality and that $(p+r)/(p-r) > 1$, we obtain
\begin{equation}
    \begin{split}
        |P_z(A_n)| & \le \sum_{i=0}^n\sum_{(\vec{T}, \vec{x})_n\in S_{i}(A_n)}\frac{(p+r)^{i}}{(p-r)^{i}}P_{(\vec{T}, \vec{x})_n}(p-r)\\
        &\le \frac{(p+r)^{n}}{(p-r)^{n}}\sum_{i=0}^n\sum_{(\vec{T}, \vec{x})_n\in S_{i}(A_n)}P_{(\vec{T}, \vec{x})_n}(p-r) \\
        & \leq 
        \left( 1 + \frac{2r}{p-r}\right)^n P_{p-r}(A_n) \\
        & \leq 
        \exp\left\{
        \frac{2rn}{p-r}\right\}
        P_{p-r}(A_n),
    \end{split}
\end{equation}
concluding the proof.
    
\end{proof}

\subsection{Proof of Lemma \ref{l:Ho}: $P_z(H_o = \infty)$ is analytic}

\begin{proof}[Proof of Lemma \ref{l:Ho}]Since $P_p(H_o = \infty) = 1 - P_p(H_o < \infty)$, it is enough to show that $P_p(H_o < \infty)$ is analytic. We know that for any $p$
\begin{equation}\label{eq:Hinfty}
    P_{p}(H_o < \infty) = \sum_{n=0}^\infty P_{p}(H_o = n).
\end{equation}
Notice that $P_{p}(H_o = n)$ is a polynomial function on $p$. To see that, just split the event $\{H_o = n\}$ into all possible trees on at most $n$ vertices and possible movements the walker can make leading it to $o$ for the first time at time $n$. Moreover, by Lemma \ref{l:An} applied to $\{H_o = n \} \in \mathcal{F}_n$, it follows that
\begin{equation}
    |P_z(H_o = n)| \le e^{2nr/(p-r)}P_{p-r}(H_o = n).
\end{equation}
On the other hand, by Theorem \ref{t:tautail}, we have that there is an absolute constant $c'$ such that
\begin{equation}\label{ineq:Hn}
    P_{p-r}(H_o = n) \le P_{p-r}(\tau_1 >  n)  \le e^{-c'n},
\end{equation}
for all $p\in (p_0,1]$. To see the inclusion of events, notice that if $H_o = n$, due to the tree structure, for any leaf the walker might have possible visited before time $n$, it must return to its father in order to visit the root at time $n$. Thus, none of the leaves visited before time $n$ can be a regeneration point. Moreover, at time $n$ the walker is at the root, so it cannot regenerate from there either. This makes $\tau_1  > n$. 

Finally, for each $n$, let $M_n$ be 
$$
M_n := e^{2rn/(p-r)}P_{p-r}(H_o = n).
$$
Then, for $r$ sufficiently small, we have that
\begin{equation}
    \sum_n M_n < \infty.
\end{equation}
Recall that the radius $r$ can be made as small as we want and the constant $c'$ does not depend on $p$ as long as $p \in (p_0,1]$, for some fixed $p_0$. Finally, by the Weierstrass $M$-test (Theorem \ref{t:Mtest}) and \eqref{eq:Hinfty} it follows that $P_z(H_o < \infty)$ is analytic, concluding the proof.

\end{proof}

\subsection{Proof of Lemma \ref{l:tau_analytics}} Finally, we move towards the proof of the last lemma needed for the proof of Theorem \ref{t:v}.
\begin{proof}[Proof of Lemma \ref{l:tau_analytics}]
    We start observing that for each $n$, 
    \begin{equation}\label{eq:taueqn}
        \begin{split}
            \Lbrace B_n, \tau_1 = n, H_o = \infty \Rbrace & = \Lbrace B_n, \zeta_K = n, H_o = \infty \Rbrace \\
            & = \bigcup_{j=1}^n \Lbrace B_n, \zeta_j = n, K = j, H_o = \infty \Rbrace \\ 
            & = \bigcup_{j=1}^n \Lbrace B_n, \zeta_j = n,\, \hitfetak_1 < \infty,\, \dots, \hitfetak_{j-1} <\infty, \hitfetak_j = \infty, H_o = \infty \Rbrace
        \end{split}
    \end{equation}
Notice that
\begin{equation}\label{eq:eventidentity}
    \begin{split}
        \Lbrace B_n,\zeta_j = n,\, \hitfetak_1 < \infty,\, \dots, \hitfetak_{j-1} <\infty, \hitfetak_j = \infty, H_o = \infty \Rbrace & \\
         = \Lbrace B_n, \zeta_j = n,\, \hitfetak_1 < n,\, \dots, \hitfetak_{j-1} < n, \hitfetak_j = \infty, H_o >n\Rbrace
    \end{split}
\end{equation}
Indeed, at time $n$ the walker is at a leaf and does not return to its father anymore. Thus, due to the tree structure of the graph, in order to visit the father of $X_{\zeta_i}$, for $i<j$, the walker must visit it before time $n$, otherwise, it would need to visit the father of $X_{\zeta_j}$ in finite time. Also, for the walker to never visit the root $o$ in the above event, it is equivalent that it does not visit it before time $n$, given that it will not return the father of $X_n$ after time $n$.
    
Then, by \eqref{eq:eventidentity} and the fact that $\zeta_j$, $\hitfetak_i$'s and $H_o$ are all stopping times, we have that
\begin{equation}\label{eq:decomp}
    \Lbrace B_n, \zeta_j = n,\, \hitfetak_1 < n,\, \dots, \hitfetak_{j-1} < n, \hitfetak_j = \infty, H_o >n\Rbrace = A_{n,j}\cap \Lbrace \hitfetak_j = \infty \Rbrace,
\end{equation}
where $A_{n,j} \in \mathcal{F}_n$ is the event below
\begin{equation}\label{eq:Anj}
A_{n,j} = \Lbrace B_n,\zeta_j = n,\, \hitfetak_1 < n,\, \dots, \hitfetak_{j-1} < n, H_o >n\Rbrace.
\end{equation}
Having the definition of $\hitfetak_j$ in mind, the identity below holds $P_p$-a.s.
\begin{equation}
      \mathds{1}\{H_{f(X_0)} = \infty \} \circ \theta_n \cdot \mathds{1}_{A_{n,j}} = \mathds{1}\{\hitfetak_j = \infty \} \cdot \mathds{1}_{A_{n,j}}.
\end{equation}
Thus, Simple Markov Property yields
\begin{equation}\label{eq:smp}
    \begin{split}
        P_p\left( A_{n,j}, \hitfetak_j = \infty \; \middle | \; \mathcal{F}_n\right) = \mathds{1}_{A_{n,j}} P_{T_n,X_n;p}\left( H_{f(X_0)} = \infty \right).
    \end{split}
\end{equation}
Recall that on $A_{n,j}$, $X_n$ is on a leaf at the bottom of $T_n$. Thus, we can couple the TBRW starting from $(T_n,X_n)$ with $(T_n,X_n)$ in $A_{n,j}$ with a TBRW starting from the nonroot tip of an edge until the first time the former visits the father of $X_n$ and the latter visits the root. In this way, the following holds $P_p$-almost surely
\begin{equation}
    \mathds{1}_{A_{n,j}} P_{T_n,X_n;p}\left( H_{f(X_0)} = \infty \right) = \mathds{1}_{A_{n,j}}P_p(H_o = \infty).
\end{equation}
Having \eqref{eq:decomp} in mind and combining the above identity with \eqref{eq:smp} gives us that
\begin{equation}
    P_p\left( B_n,\zeta_j = n,\, \hitfetak_1 < n,\, \dots, \hitfetak_{j-1} < n, \hitfetak_j = \infty, H_o >n\right) = P_p(A_{n,j})P_p(H_o = \infty).
\end{equation}
Now going back to \eqref{eq:taueqn} and using the above identity yields
\begin{equation}
    P_p(B_n,\tau_1 = n, H_o = \infty) = P_p(H_o = \infty)\sum_{j=1}^n P_p(A_{n,j}).
\end{equation}
Notice that by the definition of $A_{n,j}$ it follows that $\{A_{n,j}\}_j$ is a sequence of disjoint $\mathcal{F}_n$-measurable events. Thus, 
\begin{equation}
    P_p(B_n,\tau_1 = n, H_o = \infty) = P_p \left(\cup_{j=1}^nA_{n,j} \right)P_p(H_o = \infty),
\end{equation}
which proves the first part of the lemma by setting $A_n := \cup_{j=1}^nA_{n,j}$. To conclude the proof, observe that for a fixed $j$ by the definition of $\tau_1$, on the event $A_{n,j}$, $\tau_1 \ge \zeta_j = n$.

\end{proof}

\section{Open problems}\label{s:open}

We conclude with some open questions arising from this work.
\begin{figure}[ht]
  \centering
  \includegraphics[width=0.6\textwidth]{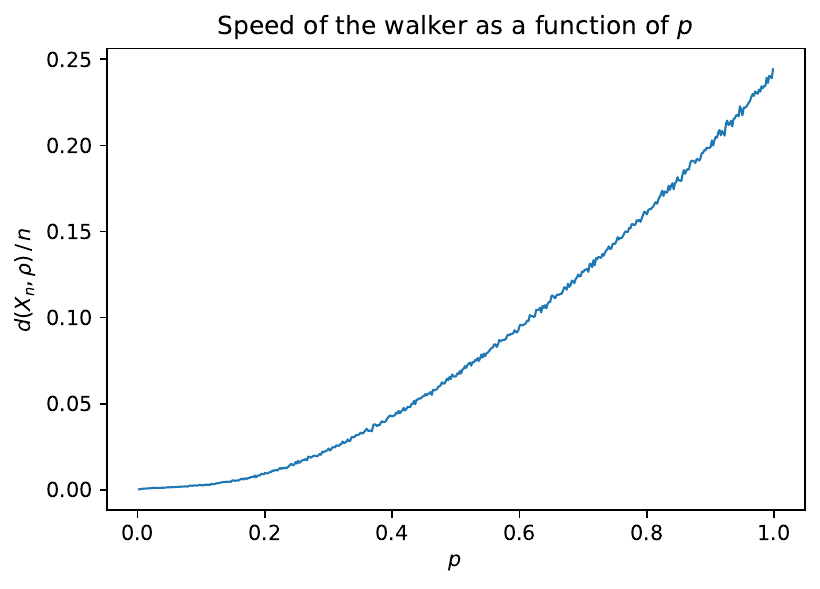}
  \caption{Estimated speed $\D{X_n}/n$ of the TBRW walker as a function
  of the growth parameter~$p$, averaged over $100$ independent runs of $2000$
  steps each.  The speed seems to be increasing in~$p$.}
  \label{fig:tbrw-speed}
\end{figure}
\begin{enumerate}[label=(\roman*)]
    \item {\bf Monotonicity of the speed.} Theorem \ref{t:v} establishes that the speed $v(p)$ is an analytic function of $p \in (0,1]$. A natural follow-up question, proposed by Y.~Peres to the second author in personal communication, is whether $v(p)$ is monotone increasing in $p$. Intuitively, as $p$ increases, the walker adds leaves more frequently, producing a stronger drift away from the root, which suggests that $v(p)$ should be increasing, see Figure~\ref{fig:tbrw-speed}. However, we do not have a formal proof of this result;
    \item {\bf Smoothness of $v(p)$ at $p=0$. } One can try to study the right derivatives of $v(p)$ at $0$, finding the correct smoothness class of the function. Figure~\ref{fig:tbrw-speed} suggests a value of $0$ for the right derivative at $0$.
    \item {\bf Phase transition on elliptic case.} For the case in which $\mathcal{L}_n = {\rm Ber}(n^{-\gamma})$, in \cite{englander2025}, the authors showed that when $\gamma > 1/2$, the walker is recurrent. The regime $\gamma \in (0,1/2]$ is believed to be transient but it is still an open problem.
    \item {\bf Large deviation principle for the empirical speed. } In Theorem \ref{t:ld}, we provide an upper bound for the large deviations of the empirical speed, it remains an open problem to obtain a matching lower bound, and characterizing the constant in the exponential as a function of $\varepsilon$. 
\end{enumerate}

\appendix 

\section{Complex Analysis results}

\begin{theorem}[Weierstrass $M$-test]\label{t:Mtest}Let $f_n$ be a sequence of complex-valued functions defined on a subset $\Omega$ of the plane and assume that there exist positive numbers $M_n$ with $|f_n(z)| \le  M_n$ for every $z\in \Omega$, and $\sum_n M_n < \infty$. Then, $\sum_n f_n$ converges uniformly on $\Omega$.
    
\end{theorem}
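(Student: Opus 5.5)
The statement is the Weierstrass $M$-test, and the natural route is the Cauchy criterion for uniform convergence. Write $s_N(z) := \sum_{n\le N} f_n(z)$ for the partial sums.

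First, pointwise convergence. For each fixed $z\in\Omega$ the series $\sum_n f_n(z)$ converges absolutely, since $\sum_n |f_n(z)| \le \sum_n M_n < \infty$. Because $\mathbb{C}$ is complete, absolute convergence implies convergence. This defines the limit function $s(z) := \lim_N s_N(z)$ on all of $\Omega$.

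Next, a uniform Cauchy estimate. For $N<N'$ and every $z\in\Omega$, the triangle inequality and the hypothesis give
$$
|s_{N'}(z)-s_N(z)| \le \sum_{n=N+1}^{N'} |f_n(z)| \le \sum_{n=N+1}^{\infty} M_n.
$$
The right-hand side does not depend on $z$. It is the tail of a convergent series of positive numbers, so it tends to $0$ as $N\to\infty$.

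Finally, letting $N'\to\infty$ in the last display, with $z$ held fixed, gives
$$
\sup_{z\in\Omega}|s(z)-s_N(z)| \le \sum_{n>N} M_n \longrightarrow 0.
$$
This is exactly uniform convergence on $\Omega$.

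None of these steps is a real obstacle. The only point needing care is that the bound must be uniform in $z$. That is guaranteed because the majorants $M_n$ are independent of $z$, so the tail estimate can be passed to the limit before taking the supremum. In the paper this is what allows the result to be combined with Theorem~\ref{t:Wei}: uniform limits of analytic functions on the compact ball $\bar{B}_{\mathbb{C}}(p,r)$ are analytic.
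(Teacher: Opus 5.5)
Your proof is correct. It is the standard uniform Cauchy-criterion argument, and the paper gives no proof of its own, only quoting the $M$-test in the appendix as a classical fact. One small quibble concerns your closing aside: Theorem~\ref{t:Wei} gives analyticity on the open ball $B_{\mathbb{C}}(p,r)$, because uniform convergence on $\bar{B}_{\mathbb{C}}(p,r)$ supplies uniform convergence on compact subsets of the open ball, and not analyticity on the closed ball itself.
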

\begin{theorem}[Weierstrass Theorem]\label{t:Wei} Let $f_n$ be a sequence of analytic functions defined on an open subset $\Omega$ of the plane, which converges uniformly on the compact subsets of $\Omega$ to a function $f$. Then f is analytic on $\Omega$. Moreover, $f'_n$ converges uniformly on the compact subsets of $\Omega$ to $f'$.
    
\end{theorem}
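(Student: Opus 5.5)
We prove the two assertions of Theorem \ref{t:Wei} separately: analyticity of $f$ through Morera's theorem, and uniform convergence of derivatives through the Cauchy integral formula combined with a covering of compacts.

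\textbf{Step 1: $f$ is continuous and analytic.} Every point $a\in\Omega$ has a closed disc $\bar D(a,\rho)\subset\Omega$. This disc is compact, so $f_n\to f$ uniformly on it. Since each $f_n$ is continuous, $f$ is continuous on $\bar D(a,\rho)$, and hence on $\Omega$.

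To show analyticity, fix such a disc and a closed triangle $\Delta\subset D(a,\rho)$. By the Cauchy--Goursat theorem, $\oint_{\partial\Delta} f_n(w)\,dw=0$ for every $n$. Uniform convergence on the compact set $\partial\Delta$ gives the estimate
$$\Bigl|\oint_{\partial\Delta} f\,dw-\oint_{\partial\Delta} f_n\,dw\Bigr|\le \mathrm{length}(\partial\Delta)\,\sup_{\partial\Delta}|f_n-f|\to 0.$$
Therefore $\oint_{\partial\Delta} f\,dw=0$ for every triangle in the disc. Morera's theorem then says $f$ is analytic on $D(a,\rho)$. Since $a$ was arbitrary, $f$ is analytic on $\Omega$.

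\textbf{Step 2: local uniform convergence of derivatives.} Suppose $\bar D(a,2\rho)\subset\Omega$ and $z\in \bar D(a,\rho)$. Both $f_n$ and $f$ are analytic on a neighbourhood of $\bar D(a,2\rho)$, so the Cauchy integral formula on the circle $|w-a|=2\rho$ gives
$$f_n'(z)-f'(z)=\frac{1}{2\pi i}\oint_{|w-a|=2\rho}\frac{f_n(w)-f(w)}{(w-z)^2}\,dw .$$
On this circle we have $|w-z|\ge\rho$. Bounding the integral by length times the supremum of the integrand gives
$$\sup_{\bar D(a,\rho)}|f_n'-f'|\le \frac{2}{\rho}\sup_{|w-a|=2\rho}|f_n-f| .$$
The right-hand side tends to $0$ because the circle is compact.

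\textbf{Step 3: from discs to arbitrary compacts.} This is the only point needing care: the discs must be chosen uniformly with respect to the compact set $K\subset\Omega$. If $\Omega=\mathbb{C}$, any $\rho>0$ works. Otherwise set $\delta=\mathrm{dist}(K,\mathbb{C}\setminus\Omega)>0$, which is positive since $K$ is compact and the complement is closed. Take $\rho=\delta/3$, so that $\bar D(a,2\rho)\subset\Omega$ for every $a\in K$. By compactness, $K$ is covered by finitely many discs $D(a_1,\rho),\dots,D(a_N,\rho)$ with $a_i\in K$. Applying the bound of Step 2 on each of these finitely many discs yields $\sup_K|f_n'-f'|\to0$. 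This completes the proof of Theorem \ref{t:Wei}.
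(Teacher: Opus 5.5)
Your proof is correct. It is the standard textbook argument: Morera's theorem applied on small discs gives analyticity of $f$, and the Cauchy integral formula for $f_n'-f'$ on circles of radius $2\rho$ gives the estimate for the derivatives, with $\rho=\delta/3$ chosen uniformly over the compact set $K$. The paper itself does not prove this statement; it only quotes it in the appendix as a classical complex-analysis fact, so there is no route of the paper's to compare yours against.
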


\section*{Acknowledgements}

Caio Alves was partially supported by the CNPq grant 447397/2024-9.

\bibliography{ref}

@article{figueiredo2017building,
author = {Daniel Figueiredo and Giulio Iacobelli and Roberto Oliveira and Bruce Reed and Rodrigo Ribeiro},
title = {{On a random walk that grows its own tree}},
volume = {26},
journal = {Electronic Journal of Probability},
publisher = {Institute of Mathematical Statistics and Bernoulli Society},
pages = {1 -- 40},
year = {2021},
doi = {10.1214/20-EJP574},
URL = {https://doi.org/10.1214/20-EJP574}
}

@article{IRVZ22,
	author = {Giulio Iacobelli and Rodrigo Ribeiro and Glauco Valle and Leonel Zuazn{\'a}bar},
	doi = {10.3150/21-BEJ1337},
	journal = {Bernoulli},
	number = {1},
	pages = {150 -- 180},
	publisher = {Bernoulli Society for Mathematical Statistics and Probability},
	title = {{Tree builder random walk: Recurrence, transience and ballisticity}},
	url = {https://doi.org/10.3150/21-BEJ1337},
	volume = {28},
	year = {2022}}

@article{ribeiro2023,
      title = {Renewal structure of the tree builder random walk},
journal = {Stochastic Processes and their Applications},
volume = {190},
pages = {104725},
year = {2025},
issn = {0304-4149},
doi = {https://doi.org/10.1016/j.spa.2025.104725},
url = {https://www.sciencedirect.com/science/article/pii/S0304414925001668},
author = {Rodrigo Ribeiro},

}

@book{grego,
title = "Analyticity Results in Bernoulli Percolation",
author = "Agelos Georgakopoulos and Christoforos Panagiotis",
year = "2023",
month = aug,
day = "3",
doi = "10.1090/MEMO/1431",
language = "English",
isbn = "9781470467050",
series = "Memoirs of the American Mathematical Society",
publisher = "American Mathematical Society",
number = "1431",
address = "USA United States",
}

@article{solomon1975,
  author = {Solomon, Fred},
  title = {Random walks in a random environment},
  journal = {The Annals of Probability},
  volume = {3},
  number = {1},
  pages = {1--31},
  year = {1975},
  doi = {10.1214/aop/1176996444}
}

@incollection{zeitouni2004,
  author = {Zeitouni, Ofer},
  title = {Random walks in random environment},
  booktitle = {Lectures on Probability Theory and Statistics},
  series = {Lecture Notes in Mathematics},
  volume = {1837},
  pages = {189--312},
  publisher = {Springer},
  year = {2004},
  doi = {10.1007/978-3-540-39874-5_2}
}

@article{pemantle2007,
  author = {Pemantle, Robin},
  title = {A survey of random processes with reinforcement},
  journal = {Probability Surveys},
  volume = {4},
  pages = {1--79},
  year = {2007},
  doi = {10.1214/07-PS094}
}

@article{biskup2011,
  author = {Biskup, Marek},
  title = {Recent progress on the random conductance model},
  journal = {Probability Surveys},
  volume = {8},
  pages = {294--373},
  year = {2011},
  doi = {10.1214/11-PS190}
}

@article{blanc2026,
author = {Arthur Blanc-Renaudie and Camille Cazaux and Guillaume Conchon-Kerjan and Tanguy Lions and Arvind Singh},
title = {{A phase transition for the biased tree-builder random walk}},
volume = {31},
journal = {Electronic Journal of Probability},
number = {none},
publisher = {Institute of Mathematical Statistics and Bernoulli Society},
pages = {1 -- 24},
year = {2026},
doi = {10.1214/25-EJP1460},
URL = {https://doi.org/10.1214/25-EJP1460}
}

@article{sabot2015,
  author = {Sabot, Christophe and Tarr\`{e}s, Pierre},
  title = {Edge-reinforced random walk, vertex-reinforced jump process and the supersymmetric hyperbolic sigma model},
  journal = {Journal of the European Mathematical Society},
  volume = {17},
  number = {9},
  pages = {2353--2378},
  year = {2015},
  doi = {10.4171/JEMS/559}
}

@article{collevecchio2020,
  author = {Collevecchio, Andrea and Kious, Daniel and Sidoravicius, Vladas},
  title = {The branching-ruin number and the critical parameter of once-reinforced random walk on trees},
  journal = {Communications on Pure and Applied Mathematics},
  volume = {73},
  number = {1},
  pages = {210--236},
  year = {2020},
  doi = {10.1002/cpa.21860}
}

@article{avena2009,
  title={Law of Large Numbers for a Class of Random Walks in Dynamic Random Environments},
  author={Luca Avena and Frank den Hollander and Frank Redig},
  journal={Electronic Journal of Probability},
  year={2009},
  volume={16},
  pages={587-617},
  url={https://api.semanticscholar.org/CorpusID:3577592}
}

@article{sznitman_zerner1999,
  author = {Sznitman, Alain-Sol and Zerner, Martin P. W.},
  title = {A law of large numbers for random walks in random environment},
  journal = {The Annals of Probability},
  volume = {27},
  number = {4},
  pages = {1851--1869},
  year = {1999},
  doi = {10.1214/aop/1022874818}
}

@article{sznitman2000,
  author = {Sznitman, Alain-Sol},
  title = {Slowdown estimates and central limit theorem for random walks in random environment},
  journal = {Journal of the European Mathematical Society},
  volume = {2},
  number = {2},
  pages = {93--143},
  year = {2000},
  doi = {10.1007/s100970050001}
}

@article{englander2025,
author = {J{\'a}nos Engl{\"a}nder and Giulio Iacobelli and Rodrigo Ribeiro},
title = {{Recurrence, transience and degree distribution for the Tree Builder Random Walk}},
volume = {61},
journal = {Annales de l'Institut Henri Poincaré, Probabilités et Statistiques},
number = {4},
publisher = {Institut Henri Poincaré},
pages = {2553 -- 2578},
year = {2025},
doi = {10.1214/24-AIHP1513},
URL = {https://doi.org/10.1214/24-AIHP1513}
}
\bibliographystyle{plain}
\end{document}